\documentclass[12pt]{amsart}
\usepackage{amsmath,amscd,amsbsy,amssymb,amsthm,amsfonts}
\usepackage{latexsym,url,bm}
\usepackage{cite,enumitem}
\usepackage{fullpage}
\usepackage{color}
\usepackage[colorlinks=true]{hyperref}
\usepackage{chngcntr}
\usepackage{apptools}
\usepackage{graphicx}

\newtheorem{theorem}{Theorem}[section]
\newtheorem{proposition}[theorem]{Proposition}
\newtheorem{lemma}[theorem]{Lemma}

\theoremstyle{remark}

\theoremstyle{definition}

\allowdisplaybreaks

\numberwithin{equation}{section}

\newcommand{\p}{\partial}
\newcommand{\e}{\epsilon}

\newcommand{\R}{\mathbb{R}}
\newcommand{\N}{\mathbb{N}}
\newcommand{\Z}{\mathbb{Z}}
\newcommand{\T}{\mathbb{T}}

\renewcommand{\H}{{\mathcal H }}
\newcommand{\al}{\alpha}
\newcommand{\f}{\frac}

\newcommand{\la}{\lambda}
\newcommand{\bw}{\mathbf{W}}

\newcommand{\nP}{{\mathbf P}}
\newcommand{\CalAZ}{{\mathcal{A}_0}}
\newcommand{\ASSharp}{{\mathcal{A}_{\sharp,\frac{7}{4}}^2}}

\keywords{Hydroelastic Stokes waves, NLS approximation, Nonlinear modulational instability}
\subjclass[2020]{35Q35, 76B15, 76E17}
\author{Lizhe Wan}
\address{Beijing International Center for Mathematical Research, Peking University}
\curraddr{}
\email{wanlizhe@pku.edu.cn}

\author{Jiaqi Yang}
\address{School of Mathematics and Statistics, Northwestern Polytechnical University}
\curraddr{}
\email{yjqmath@nwpu.edu.cn, yjqmath@163.com}

\begin{document}

\title{Nonlinear modulational instability of two-dimensional deep hydroelastic Stokes waves}

\begin{abstract}
In this paper, we study the nonlinear modulational instability of two-dimensional hydroelastic Stokes waves in infinite depth.
We first justify a focusing cubic nonlinear Schr\"odinger (NLS) approximation result for 2D deep hydroelastic wave system in the spirit of Ifrim-Tataru \cite{ifrim2019nls}.
Then we exploit the instability mechanism of the cubic NLS to prove that the Stokes waves are nonlinearly unstable under long-wave perturbations. 
\end{abstract}

\maketitle
%\tableofcontents

\section{Introduction}
The hydroelastic wave problem concerns the interaction between elastic structures and hydrodynamic excitation, and arises in a wide range of applications, including biology, medical science, and ocean engineering (see \cite{MR2333062, MR1312617} and the references therein). 
Starting from Cosserat shell theory under Kirchhoff’s hypotheses, which accounts for both bending stresses and membrane-stretching tension, Toland \cite{MR2413099} introduced a fully nonlinear elastic model for two-dimensional hydroelastic waves that possesses a clear Hamiltonian structure; see \cite{T07} for the membrane density considered and \cite{BT11} for the existence of steady periodic traveling hydroelastic waves. 
This model was later extended to three spatial dimensions by Plotnikov and Toland \cite{MR2812947}. 
Further developments in numerical simulation, experimentation, and applications of hydroelastic waves can be found in \cite{MR2812939,MR4823877}. 
For the Cauchy problem, we refer the reader to our recent work \cite{WanY} and the references therein.

In this paper, we are concerned with the nonlinear modulational instability of two-dimensional deep pure hydroelastic waves. 
The modulational instability, which is also known as the Benjamin–Feir instability or sideband instability, was discovered by Benjamin and Feir \cite{BF67} in 1967, whereby a small long-wave perturbation of a small Stokes wave leads to exponential instability. 
This is a phenomenon whereby deviations from a periodic waveform are reinforced by the nonlinearity, leading to the generation of spectral sidebands and the eventual breakup of the waveform into a train of pulses. 
The modulational instability also exists in various dispersive equations. See Whitham \cite{W67}, Benney and Newell \cite{BN67}, Lighthill \cite{L68}, Ostrovsky and Zakharov \cite{ZO09}, Jin, Liao, and Lin in \cite{JLL19}.

In the context of water waves, however, a rigorous proof of linear modulational instability (also referred to as spectral instability) for the full Euler equations has only been established relatively recently. 
In the 1990s, Bridges and Mielke \cite{BM95} proved spectral modulational instability for finite-depth water waves linearized around a small-amplitude Stokes wave.
More recently, significant progress has been made in the infinite-depth setting. 
In a breakthrough result, Nguyen and Strauss \cite{NS24} demonstrated spectral modulational instability under long-wave perturbations (i.e., perturbations with frequencies near zero).
Berti, Maspero, and Ventura \cite{BMV22, BMV23} further advanced the field by characterizing all unstable modes in both finite and infinite depth regimes. 
 In addition, they proved in \cite{BMV24} that small-amplitude Stokes waves are linearly modulationally unstable at the critical Whitham–Benjamin depth. 
 More recently, in \cite{BMV25}, Berti, Maspero, and Ventura established the existence of the first isola of modulational instability in the deep-water case under longitudinal perturbations. 
 Building on the modulational approximation of the water wave system and the instability mechanism of the focusing cubic nonlinear Schr\"{o}dinger equation, Chen and Su \cite{MR4627323} established nonlinear instability of small-amplitude Stokes waves in infinite depth under long-wave perturbations.

Let us now introduce the mathematical formulation of two-dimensional hydroelastic waves, where the fluid domain at time $t$ is denoted by $\Omega_t \subset \mathbb{T} \times \mathbb{R}$ and defined as the region below the graph of a function $\eta: \mathbb{R}_t \times \mathbb{T}_x \to \mathbb{R}$,
\[
\Omega_t = \{ (x, y) \in \T\times\mathbb{R} : y < \eta(t, x) \}.
\]
Its free boundary, corresponding to the deformed elastic sheet, is given by
\[
\Sigma_t = \{ (x, y) \in \T \times\mathbb{R} : y = \eta(t, x) \}.
\]
In the absence of gravity, the fluid motion is governed by the following system:
\begin{equation}\label{HWE1}
    \begin{cases}
        \mathbf{u}_t + \mathbf{u} \cdot \nabla \mathbf{u} = -\nabla p & \text{in } \Omega_t, \\
\operatorname{div} \mathbf{u} = 0, \quad \operatorname{curl} \mathbf{u} = 0 & \text{in } \Omega_t, \\
\eta_t=u_2-u_1\eta_x&\text{on } \Sigma_t,\\
        p = \sigma \mathbf{E}(\eta) & \text{on } \Sigma_t, \\
        \mathbf{u}(0, x) = \mathbf{u}_0(x) & \text{in } \Omega_t.
    \end{cases}
\end{equation}
Here $\mathbf{u} = (u_1, u_2) \in \mathbb{R}^2$ is the fluid velocity, $p$ is the pressure, $\sigma$ is the coefficient of flexural rigidity, and $ \sigma\mathbf{E}(\eta)$ represents the restoring force generated by the elastic sheet, expressed as a pressure jump across the interface, where
\begin{equation}\label{Elastic}
    \mathbf{E}(\eta) =\frac{1}{\sqrt{1+\eta_x^2}} 
    \left[ \frac{1}{\sqrt{1+\eta_x^2}} 
    \left( \frac{\eta_{xx}}{(1+\eta_x^2)^{3/2}} \right)_x \right]_x
    + \frac{1}{2} \left( \frac{\eta_{xx}}{(1+\eta_x^2)^{3/2}} \right)^3.
\end{equation}
See Toland \textit{et al.}\cite{MR2413099,MR2812947}, Guyenne and P\u ar\u au \cite{MR3002503} or Groves \textit{et al.} \cite{MR4733013,MR3566507}. 

Using holomorphic coordinates, Yang (the second author) derived the two-dimensional deep hydroelastic wave equations in Section $2$ of \cite{MR4846707}; we refer the interested reader to that work for the full derivation.
The same coordinate framework was previously employed in Hunter-Ifrim-Tataru \cite{MR3535894} and Ifrim-Tataru \cite{MR3667289} to establish well-posedness for deep gravity waves and capillary water waves, respectively, and has since been applied to a broad range of other water wave problems (see \cite{MR3535894, MR3499085, MR3625189, AIT, MR4483135, MR4462478, MR4858219, MR4891579}).
Although other formulations may be applicable, we utilize holomorphic coordinates to study the nonlinear modulational instability of the 2D hydroelastic Stokes waves in this paper.

Let $\mathbf{P}:= \frac{1}{2}(\mathbf{I} - iH)$ be the holomorphic projection that selects the holomorphic portion of the complex-valued function, with $H$ being the Hilbert transform.
On the Fourier side, $\nP$ projects onto the negative frequencies and half of the zero mode.

Introduce the holomorphic position variable $W$ and the holomorphic velocity potential $Q$, both taking values in $\mathbb{C}$ and defined on $\mathbb{R}_t \times \mathbb{T}_\alpha$. Under the derivations in \cite{MR4846707}, the free-boundary irrotational Euler equations \eqref{HWE1} and \eqref{Elastic} are equivalent to the system:
\begin{equation}\label{HF14}
	\begin{cases}
&W_t+F(1+W_{\al})=0,\\
&Q_t+FQ_{\al}+\nP\left[\f{|Q_{\al}|^2}{J}\right]
-i\sigma \nP\left\{\f{1}{J^{\f12}}\f{d}{d\al}\left[\f{1}{J^{\f12}}\f{d}{d\al}\left(\f{W_{\al\al}}{J^{\f12}(1+W_{\al})}
-\f{\bar{W}_{\al\al}}{J^{\f12}(1+\bar{W}_{\al})}\right)\right]\right\}\\
&-\f{i}{2}\sigma \nP\left\{\left[\f{W_{\al\al}}{J^{\f12}(1+W_{\al})}-\f{\bar{W}_{\al\al}}{J^{\f12}(1+\bar{W}_{\al})}\right]^3\right\}=0,
		\end{cases}
	\end{equation}
     where $J := |1+W_\alpha|^2$ is the Jacobian, and $F = \nP\left[\frac{Q_\alpha - \bar Q_\alpha}{J}\right]$.

\subsection{Main results}
Before discussing the main results in this paper, we first define the function spaces for us to work with. 
Let $(w, q)$ be the linearized variables around the zero solution, then the linearized hydroelastic waves around the zero solution are given by
\begin{equation} \label{ZeroLinear}
\left\{
\begin{array}{lr}
w_t + q_\alpha = 0 &  \\
q_t  - i\sigma\partial_\alpha^4 w =0,&  
             \end{array}
\right.
\end{equation}
restricted to negative frequencies.
The system is well-posed in the product space $\dot{H}^\f32 \times L^2$.
Therefore, we define the function spaces
\begin{equation*}
 \mathcal{H}^s :=H^{s+\f32}\times H^{s}, \quad \mathcal{H} :=H^{\f32}\times L^2.
\end{equation*}
Writing $u = |D|^{\f32}w - q$, $u$ solves the linear dispersive equation
\begin{equation*}
    (i\p_t - |D|^{\f52}) u = 0.
\end{equation*}

Our first main result asserts that hydroelastic waves can be approximated by the cubic NLS equation.
Apart from its application in proving the nonlinear modulational instability of the Stokes waves later, this approximation result has independent mathematical interest.
\begin{theorem} \label{t:NLSApproximation}
Let $U_0 \in H^3$, and $U$ be the corresponding solution to the cubic NLS equation
\begin{equation} \label{LambdaNLS}
 \left(i\partial_t + \frac{15}{8}\partial_x^2 \right) U = \frac{57}{512} U|U|^2,
\end{equation}
and let $Y^\epsilon$ be defined in \eqref{YURelation}, and $T>0$.
Then there exists a constant $\epsilon_0(\|U\|_{H^3}, T)$ so that for each $0<\epsilon< \epsilon_0$, there exists a pair of solution $(W, Q)$ to the system \eqref{HF14} with $\sigma = 1$ for $t$ in the time interval $[0, T\epsilon^{-2}]$ with the following estimate:
\begin{equation} \label{WQYEpsilonDiffBd}
 \|(W - Y^\epsilon, Q + Y^\epsilon)\|_{\H}\lesssim \epsilon^{\f32}.
\end{equation}
\end{theorem}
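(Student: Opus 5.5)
Following Ifrim--Tataru, we split the argument into three parts: a normal form that removes the quadratic terms, a modulated approximate solution built from $U$, and an energy estimate for the difference on the cubic time scale $\epsilon^{-2}$.

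\textbf{Step 1: normal form.} We first reduce \eqref{HF14} (with $\sigma=1$) to its quadratic and cubic parts in $(W,Q)$. The quadratic terms come from $F(1+W_\alpha)$, $FQ_\alpha$, $\nP[|Q_\alpha|^2]$ and the quadratic part of the elastic operator. The linear dispersion relation $\omega(\xi)=|\xi|^{5/2}$ admits no quadratic resonances among negative frequencies, because $|\xi+\eta|^{5/2}\neq |\xi|^{5/2}+|\eta|^{5/2}$ for $\xi,\eta\neq 0$ of the same sign. We therefore construct a bilinear normal form transformation $(W,Q)\mapsto(\tilde W,\tilde Q)$, expressed in holomorphic coordinates as in \cite{MR3667289}. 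It removes the quadratic terms and leaves an equation for $u=|D|^{3/2}\tilde W-\tilde Q$ of the form
\[
(i\partial_t-|D|^{5/2})u = C(u,\bar u,u)+\text{quartic and higher terms},
\]
where $C$ is an explicit cubic symbol.

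\textbf{Step 2: approximate solution.} The definition \eqref{YURelation} presumably takes the form $Y^\epsilon(t,\alpha)\approx \epsilon\, U(\epsilon^2 t,\epsilon(\alpha - c t))e^{i(k\alpha-\omega t)}$, with the carrier on a negative frequency $k=-1$ and $c=\omega'(k)$. Expanding $\omega(k+\epsilon\zeta)$ to second order gives the dispersion coefficient $\tfrac12\omega''(1)=\tfrac12\cdot\tfrac52\cdot\tfrac32=\tfrac{15}{8}$, which matches \eqref{LambdaNLS}. For the cubic term we evaluate the normal form cubic symbol at $(k,k,k)$ and add the contributions generated by the quadratic normal form correction. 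This should give exactly $\tfrac{57}{512}$. Checking this value is a key algebraic verification.

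We then correct $Y^\epsilon$ by the quadratic normal form terms (second harmonic and mean-flow pieces of size $\epsilon^2$) and by higher-order modulation corrections. With these corrections the ansatz solves the normal-form equation up to an error of size $\epsilon^{5/2}$ or better in $\mathcal H$ per unit time, or $\epsilon^{3/2}$ after integration over times $\epsilon^{-2}$. The regularity assumption $U_0\in H^3$ is what allows us to control derivatives of $U$ in these error terms.

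\textbf{Step 3: stability estimate.} Let $(W,Q)$ be the exact solution with data equal to the corrected ansatz at $t=0$; local existence is provided by \cite{WanY}. We estimate the difference between $(W,Q)$ and the approximate solution using a modified energy at the $\mathcal H$ level of the linearized equation around the approximate solution. The energy must be cubic-accurate, meaning its time derivative is quartic in smallness. Then
\[
\frac{d}{dt}E \lesssim \epsilon^2 E + \epsilon^{3/2+2}\sqrt{E}.
\]
By Gronwall's inequality this yields $\|\cdot\|_{\mathcal H}\lesssim\epsilon^{3/2}$ up to time $T\epsilon^{-2}$. Finally, undoing the normal form and removing the $O(\epsilon^2)$ corrections gives \eqref{WQYEpsilonDiffBd}.

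\textbf{Main obstacle.} We expect Step 3 to be the hardest part. The normal form is unbounded because of the high-order elastic operator, so it cannot be applied directly at the energy level. Following Hunter--Ifrim--Tataru, we would instead use quasilinear modified energies: cubic energy corrections obtained from the normal form, kept only in the non-derivative-losing terms, and combined with the paper's good variables for the linearized equation. The difficulty lies in obtaining the bound $\epsilon^2$ for the coefficient of $E$ in the energy inequality, which is necessary to reach time $\epsilon^{-2}$.
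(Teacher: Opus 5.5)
Your Steps 1 and 2 follow the paper's construction closely. The paper imports the quadratic and cubic normal forms of \cite{MR4846707}, reads off $\frac{57}{512}$ from the resonant cubic terms at $\xi_0=-1$, adds the corrector $Y^+=-\frac{31}{1024}\tilde Y^\epsilon|\tilde Y^\epsilon|^2$, and inverts the normal forms to cubic order. Two small fixes are needed there. First, the residual has to be $O(\epsilon^{7/2})$ in $L^2$: an $\epsilon^{5/2}$ error integrates to $\epsilon^{1/2}$ over times $\epsilon^{-2}$, not $\epsilon^{3/2}$. The $\epsilon^{7/2}$ size is in fact what you use in Step 3. Second, since $U_0$ is only in $H^3$, you must first truncate $U$ to frequencies $\lesssim\epsilon^{-1}$, as the paper does, so that the approximate solution lies in the high Sobolev spaces where the cubic energy estimates are available.

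The genuine gap is Step 3. The difference $d=(W,Q)-(W^\epsilon,Q^\epsilon)$ does not solve the linearized equation around the approximate solution. It solves that equation plus terms that are at least quadratic in $d$. In this quasilinear system those terms lose derivatives, and, more importantly, they carry no factor $\epsilon^2$. They contribute roughly $\|d\|_{W^{k,\infty}}E$ to $\frac{d}{dt}E$. The best pointwise bound you can get, by interpolating $\|d\|_{\mathcal{H}}\lesssim\epsilon^{3/2}$ against a high-norm bound of size $\epsilon^{1/2}$, is $\epsilon^{3/2-\delta}$. Its integral over $[0,T\epsilon^{-2}]$ is $T\epsilon^{-1/2-\delta}\gg1$. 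So your inequality $\frac{d}{dt}E\lesssim\epsilon^2E+\epsilon^{7/2}\sqrt{E}$ does not follow from a linearized energy. It would need a new cubic-accurate energy for differences of two solutions, with normal-form corrections for the $d\cdot d$ interactions as well. You do not construct such an energy, and the cited works do not provide one.

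The missing idea is the Ifrim--Tataru device used in Section \ref{s:ReplaceExact}: never estimate a difference directly, only solutions of the linearized equation around exact solutions. Let $(W,Q)(t,s)$ be the exact solution with $(W,Q)(s,s)=(W^\epsilon,Q^\epsilon)(s)$. Then $\partial_s(W,Q)(t,s)$ solves exactly the linearized system \eqref{wqLinearized} around the exact solution $(W,Q)(\cdot,s)$. Its data at $t=s$ is the residual of the approximate solution, of size $\epsilon^{7/2}$.

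Theorem \ref{t:Hswellposedflow}, with the pointwise control norm $\mathcal{A}_2$ replacing the $L^4$-based norm of \cite{WanY}, gives $\frac{d}{dt}E_{lin}\lesssim\mathcal{A}_2^2E_{lin}$. Hence $\|(w,r)(t,s)\|_{\mathcal{H}}\lesssim e^{CT}\epsilon^{7/2}$ as long as $\mathcal{A}_2(t,s)\lesssim\epsilon$. Integrating in $s\in[0,t]$ then yields $\|(W,Q)(t,0)-(W^\epsilon,Q^\epsilon)(t)\|\lesssim Te^{CT}\epsilon^{3/2}$.

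The bound $\mathcal{A}_2\lesssim\epsilon$, which you flag as the main obstacle, is closed by a bootstrap in $(t,s)$. The cubic nonlinear energy $E^{n,(3)}$ of Theorem \ref{t:QMEHP} satisfies $\frac{d}{dt}E^{n,(3)}\lesssim A_0A_{\frac{5}{2}}E^{n,(3)}$. This keeps $\|(\mathbf{W},R)\|_{\mathcal{H}^N}\lesssim e^{CT}\epsilon^{1/2}$. The approximate solution $(W^\epsilon,Q^\epsilon)$ has pointwise norms of size $O(\epsilon)$. Interpolating the high-norm bound with the $\epsilon^{3/2}$ closeness to $(W^\epsilon,Q^\epsilon)$ gives $\mathcal{A}_2(t,s)\lesssim\epsilon+Te^{CT}\epsilon^{3/2-\delta}$. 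The same bootstrap is what extends the local solution of \cite{WanY} to the full interval $[0,T\epsilon^{-2}]$.
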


Same reasoning as Remark 1.10 in \cite{MR4462478}, the constant $\e_0$ may be chosen as $\e_0 = e^{-CT}$ with a large universal constant $C$.
As a consequence, the maximal time $T_{max} \approx |\log \e_0|$ in this case.
We will just pick $T = |\log \e|$ in Theorem \ref{t:modulational} below.

For the pure-hydroelastic wave system \eqref{HF14}, we will prove in Theorem \ref{t:Existence} that the Stokes waves exist.
We remark that when the gravity is also taken into account, the existence of Stokes waves is considered in \cite{MR2413099}.
Our second main result in this paper asserts that the Stokes waves that we construct are nonlinearly modulationally unstable. 

\begin{theorem} \label{t:modulational}
There exists a sufficiently small constant $\epsilon_0 \in (0,1)$ such that for all $\epsilon \in (0, \epsilon_0]$, a Stokes wave solution $(W_{ST}, Q_{ST})$ with amplitude $\epsilon$ is nonlinearly modulationally unstable in the following sense:

For any $q\in \mathbb{Q}_+$ satisfying $q \geq \frac{1}{\epsilon}$ , there exists a pair of solution $(W, Q)$ to the system \eqref{HF14} satisfying the following conditions:
\begin{enumerate}
\item At time $t=0$, $(W_0, Q_0)$ are close to the Stokes waves in the sense that
\begin{equation*}
    \|(W_0, Q_0) - (W_{ST}(0,\cdot), Q_{ST}(0, \cdot))\|_{\H(q\T)} \lesssim  \epsilon^{\frac{3}{2}}.
\end{equation*}
\item The solution $(W, Q)$ exist on $t\in [0, \epsilon^{-2}|\log \e|]$.
\item  The solution $(W, Q)$ satisfy
\begin{equation*}
    \sup_{t\in [0, \epsilon^{-2}|\log \e|]}  \|(W(t,\cdot), Q(t,\cdot)) - (W_{ST}(t, \cdot ), Q_{ST}(t,\cdot)) \|_{\H(q\T)} \gtrsim   \epsilon^{\frac{1}{2}}.
\end{equation*}
\end{enumerate}
\end{theorem}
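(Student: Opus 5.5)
The plan follows Chen--Su \cite{MR4627323}: combine the NLS approximation of Theorem \ref{t:NLSApproximation} with the instability of the plane wave for the focusing cubic NLS \eqref{LambdaNLS}.

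\textbf{Step 1: the Stokes wave and the plane wave.} By Theorem \ref{t:Existence}, the Stokes wave $(W_{ST},Q_{ST})$ of amplitude $\epsilon$ has leading part $Y^\epsilon$ built from a constant-modulus envelope. This envelope is the plane wave $U_{ST}(t)=A e^{-i\frac{57}{512}|A|^2 t}$, which solves \eqref{LambdaNLS} exactly, with $|A|\sim 1$. I will then check that the higher-order Stokes expansion differs from $Y^\epsilon[U_{ST}]$ by $O(\epsilon^{3/2})$ in $\H(q\T)$ on the whole time interval. The residual is $O(\epsilon^2)$ pointwise, and the $L^2(q\T)$ norm of a $q$-periodic profile gains a factor $(q\epsilon)^{1/2}$ after rescaling by $x=\epsilon\alpha$. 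The same normalization must be tracked consistently in the $\epsilon^{3/2}$ and $\epsilon^{1/2}$ thresholds below. I will also run the approximation argument of Theorem \ref{t:NLSApproximation} on $q\T$ instead of $\R$; the energy estimates are unaffected.

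\textbf{Step 2: NLS instability.} Since \eqref{LambdaNLS} has dispersion $\frac{15}{8}>0$ and nonlinearity $\frac{57}{512}>0$ with the sign convention $(i\partial_t+\partial_x^2)U=+U|U|^2$, I will need to confirm from the signs that this is the focusing case. Linearizing around $U_{ST}$ with perturbation frequency $k$ gives growth rate $\lambda(k)=k\sqrt{2\cdot\frac{15}{8}\cdot\frac{57}{512}|A|^2-(\frac{15}{8})^2k^2}$, which is positive for small $k$. Because the NLS lives on the rescaled torus $\epsilon q\T$ and $\epsilon q\ge1$, I can choose an admissible frequency $k$ (a multiple of $2\pi/(\epsilon q)$) in the unstable band with $\lambda(k)\ge\lambda_0>0$; the rationality of $q$ only matters for matching the periods. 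I then take initial data $U_0=U_{ST}(0)+\delta\phi_k$ with $\delta=\epsilon^{1/2}$, or smaller, and use a standard nonlinear bootstrap (Grenier's method or direct Duhamel estimates). This gives $\|U(t)-U_{ST}(t)\|_{L^2}\gtrsim 1$ at a time $t_*\sim\lambda_0^{-1}|\log\delta|\lesssim|\log\epsilon|$, while $U$ stays bounded in $H^3$ on $[0,t_*]$. The long-time bound uses conservation of mass and energy plus the growth bound.

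\textbf{Step 3: transfer to \eqref{HF14}.} Apply Theorem \ref{t:NLSApproximation} with $T=|\log\epsilon|$ to both $U$ and $U_{ST}$, which requires the remark that $\epsilon_0$ may be taken as $e^{-CT}$. This produces a solution $(W,Q)$ on $[0,\epsilon^{-2}|\log\epsilon|]$ within $\epsilon^{3/2}$ of $Y^\epsilon[U]$. At $t=0$, the distance to the Stokes wave is at most $\|Y^\epsilon[U_0]-Y^\epsilon[U_{ST}(0)]\|+O(\epsilon^{3/2})$, which is $\lesssim\epsilon^{3/2}$ once the size $\delta$ is set according to the normalization. At the time $\epsilon^{-2}t_*$, the difference $Y^\epsilon[U]-Y^\epsilon[U_{ST}]$ is amplified to order $\epsilon^{1/2}$ (its linear part in $\H$ is $\sim\epsilon\cdot\|U-U_{ST}\|\cdot$ scaling). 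This dominates the $O(\epsilon^{3/2})$ errors and proves property (3).

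\textbf{Main obstacle.} The delicate step is the logarithmic time scale. The constant in \eqref{WQYEpsilonDiffBd} depends on $\|U\|_{H^3}$ and grows like $e^{CT}$. The NLS instability must be engineered so that growth from size $\delta$ to $O(1)$ happens within $T\lesssim|\log\epsilon|$ with a small enough constant, so that $e^{CT}\epsilon^{3/2}$ still beats $\epsilon^{1/2}$. I would first try choosing $\delta$ as a small power of $\epsilon$ and $k$ maximizing $\lambda(k)$. If the constants do not close, I would shrink the target, stopping at the first time the deviation reaches a fixed small $c_0$, and use a sharper bootstrap that exploits the explicit exponential growth rate.
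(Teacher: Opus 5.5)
\textbf{Gap: the sign check you defer in Step 2 fails.} Linearize $(i\partial_t+a\partial_x^2)U=\lambda U|U|^2$ about $Ae^{-i\lambda A^2t}$, with $A>0$. Writing $U=(A+f+ig)e^{-i\lambda A^2t}$ gives $f_t=-ag_{xx}$ and $g_t=af_{xx}-2\lambda A^2f$. The mode $\cos kx$ then evolves like $e^{\gamma t}$ with $\gamma^2=-ak^2(ak^2+2\lambda A^2)$. With $a=\frac{15}{8}$ and $\lambda=\frac{57}{512}$ as printed in \eqref{LambdaNLS}, $\gamma^2<0$ for every $k\neq0$: that equation is defocusing and its plane wave is stable. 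Your formula $k\sqrt{2a\lambda|A|^2-a^2k^2}$ is the one obtained after $\lambda\mapsto-\lambda$.

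The same sign also breaks Step 1. With $\epsilon|A|=\varepsilon$, your $U_{ST}=Ae^{-i\frac{57}{512}|A|^2t}$ makes $Y^\epsilon$ travel with speed $1+\frac{57}{512}\varepsilon^2$. Theorem \ref{t:Existence} gives $c=1-\frac{57}{512}\varepsilon^2+O(\varepsilon^4)$ instead. The phase drift $\frac{57}{256}\varepsilon^2t=2\epsilon^2t$ is $O(1)$ once $t\sim\epsilon^{-2}$, so the two waves differ by $\sim\epsilon^{1/2}$ in $\H(\epsilon^{-1}\T)$. That is exactly the size of the effect you are trying to detect.

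The paper's Section \ref{s:Modulation} works instead with the focusing equation $(i\partial_t+\partial_x^2)u+|u|^2u=0$, its plane wave $u_{ST}=e^{it}$, and the Chen--Su growth result (Theorem \ref{t:NLSInstability}). The rescaled $Y^\epsilon_{ST}=i\epsilon\sqrt{512/57}\,e^{-i(t+x)}e^{i\epsilon^2t}$ travels with speed $1-\epsilon^2$, which matches $c$ when $\varepsilon=\sqrt{512/57}\,\epsilon$. Note that substituting \eqref{UuRelation} actually yields $(i\partial_t+\frac{15}{8}\partial_x^2)U=-\frac{57}{512}U|U|^2$, not \eqref{LambdaNLS}. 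What your plan is missing is to fix this sign against Theorem \ref{t:Existence}. Only the focusing envelope equation, whose plane wave reproduces $c^2=1-\frac{57}{256}\varepsilon^2$, makes Steps 1 and 2 true. That same equation is what must be fed into Theorem \ref{t:NLSApproximation} in Step 3, so the sign in \eqref{YMinusEqn} has to agree with it.

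\textbf{Secondary issue: the normalization leaves no slack, so the parameters you leave open are forced.} Since $\|\epsilon f(\epsilon\,\cdot)\|_{L^2(\epsilon^{-1}\T)}=\epsilon^{1/2}\|f\|_{L^2(\T)}$, property (1) requires $\delta\lesssim\epsilon$. Your choice $\delta=\epsilon^{1/2}$ violates it, as does any $\delta=\epsilon^{\theta}$ with $\theta<1$ (your ``small power'' fallback). Property (3) then needs growth by a factor $\sim\epsilon^{-1}$ within NLS time $|\log\epsilon|$. With $|A|^2=\frac{512}{57}$, the maximal rate of the focusing envelope equation is exactly $\frac{57}{512}|A|^2=1$, attained at $k^2=\frac{8}{15}$. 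A mode with rate $\lambda_0<1$ therefore only produces a deviation of order $\epsilon^{3/2-\lambda_0}\ll\epsilon^{1/2}$ at $t=\epsilon^{-2}|\log\epsilon|$. This is why the paper takes $\delta=\epsilon/2$, $\mu=\frac12$, $T_0=\log(\mu/\delta)=|\log\epsilon|$ with the rate-one mode of the standard NLS, rather than a generic $t_*\lesssim|\log\epsilon|$.

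Your concern about the $T$-dependence of the constant in \eqref{WQYEpsilonDiffBd} at $T=|\log\epsilon|$ is legitimate. The paper only cites the remark $\epsilon_0=e^{-CT}$ at that point, so there your plan is no weaker than the paper's.
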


For simplicity, in the rest of this paper, we will just consider the case where $q = \frac{1}{\epsilon}$.
The proof of Theorem \ref{t:modulational} relies heavily on Theorem \ref{t:NLSApproximation}.
By rigorously showing that the hydroelastic system is well-approximated by the focusing cubic NLS equation, we can inherit the well-known instability properties of NLS equation. 
This highlights the universality of the NLS equation as a modulation model for dispersive PDEs.

The remainder of this paper is structured as follows.
Section \ref{s:Stokes} is dedicated to the existence of pure hydroelastic Stokes waves. 
We derive the Babenko equation for the wave profiles and apply the Crandall-Rabinowitz local bifurcation theorem to construct small-amplitude solutions.
Section \ref{s:Approximation} provides the proof of Theorem \ref{t:NLSApproximation}. 
We perform a detailed normal form analysis to eliminate non-resonant quadratic and cubic terms, establishing the rigorous NLS approximation on the appropriate time scale.
Section \ref{s:Modulation} combines the previous results to prove Theorem \ref{t:modulational}. 
We use the instability of the NLS equation to construct a perturbation that grows to order $\e^\f12$ within the valid time interval of the approximation.
Appendices \ref{s:bifurcation}, \ref{s:Modified}, and \ref{s:NLSInstability} provide technical background on bifurcation theory, modified energy estimates for hydroelastic waves, and the specific instability mechanism of the cubic NLS equation, respectively.

\section{Existence of the pure hydroelastic Stokes waves}  \label{s:Stokes}   
In this section, we establish the existence of the pure hydroelastic Stokes waves, and obtain its first order asymptotic expansion.
We will first derive the Babenko equation that describes the profiles of the Stokes waves.
Then we will use the Crandall-Rabinowitz local bifurcation theorem to construct the solution of the Babenko equation. 
Although it is possible to further apply the global bifurcation theorem to obtain refined properties of the Stokes waves, we will just focus on the nonlinear modulational instability of the Stokes waves, and will not proceed further.

\subsection{Derivation of the Babenko equation}    
First, we derive the Babenko equation following the approach in \cite{Rowan24}.
We note that  the Stokes waves can be characterized as a critical point of the total energy subject to the constraint of a fixed momentum.
The velocity $c$ can be viewed as the corresponding Lagrange multiplier.
We will therefore use a variational approach to derive the real-valued Babenko equation satisfied by the Stokes waves profiles.

$(W, Q)$ are holomorphic functions.
For a holomorphic function, its real part equals the Hilbert transform of its imaginary part.
In the derivation of the Babenko equation, instead of writing a complex version of the equation using $(W,Q)$, we will just consider the real version of the Babenko equation using $(\Im W, \Im Q)$.

We define the Jacobian
\begin{equation*}
    J = |1+W_\alpha|^2 = (1+|D|\Im W)^2 +(\Im W_\alpha)^2.
\end{equation*}
Here, the differential operator $|D|$ is defined by
\begin{equation*}
    |D|f(\alpha) = \partial_\alpha Hf(\alpha).
\end{equation*}
An alternative definition of operator $|D|$ is given via the Fourier transform
\begin{equation*}
    \widehat{|D|f}(\xi) = |\xi|\hat{f}(\xi).
\end{equation*}

The total energy of \eqref{HF14} is given by
\begin{equation*}
    \mathcal{E} = \frac{1}{2}\int_0^{2\pi} |D|\Im Q \cdot\Im Q \,d\alpha +  \frac{\sigma}{2}\int_0^{2\pi}   \frac{((1+|D|\Im W) \Im W_{\al \al} - \Im W_\al |D|\Im W_\al)^2}{J^\f52} \,d\al,
\end{equation*}
where the first integral corresponds to the kinematic energy of the fluid, and the second integral corresponds to the nonlinear elastic energy of the surface.
The horizontal momentum associated to \eqref{HF14} is
\begin{equation*}
    \mathcal{P} = -\int_0^{2\pi} |D|\Im Q \Im W \, d\alpha. 
\end{equation*}

Taking the functional derivatives of the energy $\mathcal{E}$ and momentum $\mathcal{P}$ with respect to $\Im Q, \Im W$,  we get two equations.
The first equation is on the functional derivatives with respect to $\Im Q$,
\begin{equation*}
\frac{\delta \mathcal{E}}{\delta \Im Q} = c \frac{\delta \mathcal{P}}{\delta \Im Q},
\end{equation*}
which can be simplified to
\begin{equation}
\Im Q =  - c\Im W. \label{BabenkoQ}
\end{equation}

The second equation is on the functional derivative with respect to $\Im W$, 
\begin{equation*}
\frac{\delta \mathcal{E}}{\delta \Im W} = c \frac{\delta \mathcal{P}}{\delta \Im W}.
\end{equation*}
By direct computation,
\begin{align*}
 \frac{\delta \mathcal{E}}{\delta \Im W}=&\sigma\left[\frac{((1+|D|\Im W) \Im W_{\al \al} - \Im W_\al |D|\Im W_\al)(1+|D|\Im W)}{J^\f52}\right]_{\al\al}\\
&+\sigma|D|\left[\frac{((1+|D|\Im W) \Im W_{\al \al} - \Im W_\al |D|\Im W_\al)\Im W_{\al}}{J^\f52}\right]_{\al}\\
&+\sigma\left[\frac{((1+|D|\Im W) \Im W_{\al \al} - \Im W_\al |D|\Im W_\al)|D|\Im W_{\al}}{J^\f52}\right]_{\al}\\
&+\sigma|D|\left[\frac{((1+|D|\Im W) \Im W_{\al \al} - \Im W_\al |D|\Im W_\al)\Im W_{\al\al}}{J^\f52}\right]\\
&+\f52\sigma\left[\frac{((1+|D|\Im W) \Im W_{\al \al} - \Im W_\al |D|\Im W_\al)^2\Im W_{\al}}{J^\f72}\right]_{\al}\\
&-\f52\sigma|D|\left[\frac{((1+|D|\Im W) \Im W_{\al \al} - \Im W_\al |D|\Im W_\al)^2(1+|D|\Im W)}{J^\f72}\right],\\
  \frac{\delta \mathcal{P}}{\delta \Im W} =& -|D|\Im Q,
\end{align*}
which leads to the second equation
\begin{align*}
-c|D|\Im Q=&\sigma\left[\frac{((1+|D|\Im W) \Im W_{\al \al} - \Im W_\al |D|\Im W_\al)(1+|D|\Im W)}{J^\f52}\right]_{\al\al}\\
&+\sigma|D|\left[\frac{((1+|D|\Im W) \Im W_{\al \al} - \Im W_\al |D|\Im W_\al)\Im W_{\al}}{J^\f52}\right]_{\al}\\
&+\sigma\left[\frac{((1+|D|\Im W) \Im W_{\al \al} - \Im W_\al |D|\Im W_\al)|D|\Im W_{\al}}{J^\f52}\right]_{\al}\\
&+\sigma|D|\left[\frac{((1+|D|\Im W) \Im W_{\al \al} - \Im W_\al |D|\Im W_\al)\Im W_{\al\al}}{J^\f52}\right]\\
&+\f52\sigma\left[\frac{((1+|D|\Im W) \Im W_{\al \al} - \Im W_\al |D|\Im W_\al)^2\Im W_{\al}}{J^\f72}\right]_{\al}\\
&-\f52\sigma|D|\left[\frac{((1+|D|\Im W) \Im W_{\al \al} - \Im W_\al |D|\Im W_\al)^2(1+|D|\Im W)}{J^\f72}\right].   
\end{align*}

Therefore by eliminating $\Im Q$ in the second equation using \eqref{BabenkoQ}, we obtain that $U: =\Im W$ solves the Babenko equation
\begin{equation} \label{e:Babenko}
  G\left(\frac{c^2}{\sigma}, U \right) = 0,  
\end{equation}
where
\begin{align*}
G&\left(\lambda, U \right) := \lambda |D|U-\left[\frac{((1+|D|U)  U_{\al \al} -U_\al |D|U_\al)(1+|D|U)}{J^\f52}\right]_{\al\al} \\
&-|D|\left[\frac{((1+|D|U) U_{\al \al} - U_\al |D|U_\al)U_{\al}}{J^\f52}\right]_{\al}-\left[\frac{((1+|D|U) U_{\al \al} - U_\al |D|U_\al)|D|U_{\al}}{J^\f52}\right]_{\al} \\
&-|D|\left[\frac{((1+|D|U) U_{\al \al} - U_\al |D|U_\al)U_{\al\al}}{J^\f52}\right]-\f52\left[\frac{((1+|D|U) U_{\al \al} -U_\al |D|U_\al)^2U_{\al}}{J^\f72}\right]_{\al} \\
&+\f52|D|\left[\frac{((1+|D|U) U_{\al \al} - U_\al |D|U_\al)^2(1+|D|U)}{J^\f72}\right] .
\end{align*}

$U = 0$ is a trivial solution of \eqref{e:Babenko}. 
If one can obtain a non-trivial solution $U$ of \eqref{e:Babenko}, then \eqref{HF14} has a Stokes wave solution
\begin{equation*}
    \left( HU(\alpha + ct) + iU(\alpha + ct),   cHU(\alpha + ct)+icU(\alpha + ct)\right).
\end{equation*}

\subsection{Local bifurcation of the Babenko equation}
Next, we use the Crandall-Rabinowitz local bifurcation theorem to construct the non-trivial solution of the Babenko equation \eqref{e:Babenko}.
We will assume without loss of generality that $U$ is an even function with mean value zero.

Since $U$ is a trivial solution of \eqref{e:Babenko}, $G(\lambda, 0) = 0$ for all $\lambda \in \R$.
Although the Babenko equation \eqref{e:Babenko} looks complicated, it turns out that $\partial_U G[(\lambda, 0)]V$ is simple.
\begin{equation*}
LV := \partial_U G[(\lambda, 0)]V =  \lambda |D|V - \p_\al^4 V.
\end{equation*}

First, we show that $L$ is a Fredholm operator of index zero.
We write $L = T + K$, where $T = - \p_\al^4$ and $K = \lambda|D|$.
$T$ is a fourth-order linear elliptic differential operator. 
In the space of functions with mean value zero, $T$ is a homeomorphism from $H^s_e(\T)$ to $H_e^{s-4}(\T)$,  where $H^s_e(\T):=\{u\in H^s(\T): \text{$u$ is even}\}$. Since the inclusion $H_e^{s-1}(\T) \hookrightarrow H_e^{s-4}(\T)$ is compact, and $K$ maps $H_e^{s}(\T) \rightarrow H_e^{s-1}(\T)$ continuously, $K$ is therefore a compact operator from $H_e^s(\T)$ to $H_e^{s-4}(\T)$.
Hence, by Lemma \ref{t:Fredholm}, $L = T+K$ is a Fredholm operator with index zero.

Next, we consider the kernel of $L$.
\begin{equation*}
  \lambda |D|V - \p_\al^4 V = 0 \Leftrightarrow  (\la |n|-|n|^4)\hat{V}_n =0, \quad\forall n\in\mathbb{Z},\quad\hat{V}_n:=\f{1}{2\pi}\int_0^{2\pi}V(\al) e^{-in\al}d\al.
\end{equation*}
If there exists $k\in \mathbb{Z}\setminus\{0\}$ such that $\la= |k|^3$, then ker $ L=\text{span}\{\cos(k\al)\}$.

Then, we verify the transversality condition.
Assuming that $\lambda_0 = |k_0|^3$ for some $k_0 \in \N^+$, we choose $\xi_0 = \cos k_0\al$, and compute
\begin{equation*}
\p_{\lambda, U}^2 G[(\lambda_0, 0)](1, \xi_0) = |D|\cos k_0 \al = k_0 \cos k_0 \al.
\end{equation*}
Note that range$(L) = \{f\in H^{s-4}_e : \langle f, \cos k_0\al \rangle_{L^2} = 0 \}$.
Since 
\begin{equation*}
 \langle k_0 \cos k_0 \al, \cos k_0\al \rangle_{L^2} = \int_0^{2\pi} k_0 \cos^2 (k_0\al) \, d\al = k_0 \pi \neq 0,    
\end{equation*}
we conclude that 
\begin{equation*}
  \p_{\lambda, U}^2 G[(\lambda_0, 0)](1, \xi_0) \notin \text{range}(L).  
\end{equation*}

Therefore using Crandall-Rabinowitz bifurcation theorem, there exists a constant $\varepsilon_0$ such that for $|\varepsilon|<\varepsilon_0$, 
\begin{equation*}
    G(\Lambda(\varepsilon), \varepsilon\chi(\varepsilon)) = 0, \quad \text{where}\quad \Lambda(0) = k_0, \quad \chi(0) = \cos (k_0\al).
\end{equation*}
$\Lambda$ and $s \mapsto s\chi(s)$ are of class $C^1$, and $\chi$ is of class $C^0$ on $(-\varepsilon, \varepsilon)$.

In the rest of this paper, for simplicity, we will simply  consider $k_0 = 1$.
Moving back to $(W, Q)$ variables in \eqref{HF14}, this means that $(W_{ST}, Q_{ST})$ are frequency-localized at around $\xi_0 = -1$.
We obtain the following result on the existence of the Stokes waves for two-dimensional deep hydroelastic waves.

\begin{theorem} \label{t:Existence}
There exists a constant $\varepsilon_0$ such that if $\frac{c^2}{\sigma} = 1 -\frac{57}{256}\varepsilon^2$, for $ |\varepsilon| < \varepsilon_0$, \eqref{HF14} has a Stokes wave solution
\begin{equation*}
  (W_{ST}(t,\al), Q_{ST}(t, \al)) = (W_{ST}(\al+ ct), Q_{ST}(\al + ct)).
\end{equation*}
Moreover, $(W_{ST}, Q_{ST})$ are frequency-localized at around $\xi_0 =-1$, in the sense that
\begin{equation*}
    (W_{ST}(t,\al), Q_{ST}(t,\al)) = \varepsilon (ie^{-i(\al+ ct)}, -cie^{-i(\al+ ct)}) + O(\varepsilon^2) = \varepsilon (ie^{-i(\al+ ct)}, -\sqrt{\sigma}ie^{-i(\al+ ct)}) + O(\varepsilon^2).
\end{equation*}
\end{theorem}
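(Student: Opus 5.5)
The proof combines the Crandall--Rabinowitz branch built above with a Lyapunov--Schmidt expansion of $G$ to third order in $\varepsilon$. That expansion identifies the speed correction $\Lambda(\varepsilon)=1+\Lambda_2\varepsilon^2+O(\varepsilon^3)$ with $\Lambda_2=-\frac{57}{256}$.

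\textbf{Existence and first-order profile.} With $k_0=1$, Crandall--Rabinowitz gives $G(\Lambda(\varepsilon),\varepsilon\chi(\varepsilon))=0$, where $\Lambda(0)=1$ and $\chi(0)=\cos\alpha$. Set $U=\Im W_{ST}=\varepsilon\cos\alpha+O(\varepsilon^2)$ (even, mean zero) and $\Re W_{ST}=HU$. Since $H\cos\alpha=\sin\alpha$, we get $W_{ST}=\varepsilon(\sin\alpha+i\cos\alpha)+O(\varepsilon^2)=\varepsilon ie^{-i\alpha}+O(\varepsilon^2)$. This is indeed supported on negative frequencies, i.e.\ localized near $\xi_0=-1$. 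Relation \eqref{BabenkoQ} gives $Q_{ST}=-cW_{ST}$, and $c=\sqrt{\sigma\Lambda}=\sqrt\sigma+O(\varepsilon^2)$. Finally, the traveling ansatz $\alpha\mapsto\alpha+ct$ solves \eqref{HF14}: the Babenko equation is the Euler--Lagrange equation $\delta\mathcal E=c\,\delta\mathcal P$, and critical points of this type are exactly the relative equilibria of the Hamiltonian flow, i.e.\ the traveling waves. I would either cite the variational derivation or check directly that substituting $W(\alpha+ct)$ into \eqref{HF14} reduces to \eqref{e:Babenko} after taking imaginary parts.

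\textbf{Identifying $\Lambda_2$.} Because $G$ is odd under the symmetry $U\mapsto U(\cdot+\pi)$ combined with sign structure, and the quadratic terms of $G$ vanish against the kernel, we have $\Lambda'(0)=0$. Write
\[
U=\varepsilon\cos\alpha+\varepsilon^2U_2+\varepsilon^3U_3+\dots,\qquad \lambda=1+\varepsilon\lambda_1+\varepsilon^2\lambda_2+\dots .
\]
Next expand $G=L_\lambda U+N_2(U,U)+N_3(U,U,U)+O(|U|^4)$, with $L_\lambda=\lambda|D|-\partial_\alpha^4$.
\begin{itemize}
\item At order $\varepsilon^2$: projecting $\lambda_1|D|\cos\alpha+N_2(\cos\alpha,\cos\alpha)$ onto $\cos\alpha$ gives $\lambda_1=0$, since $N_2(\cos,\cos)$ only has modes $0$ and $2$. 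Then $U_2=-L_1^{-1}N_2(\cos,\cos)$, which is a multiple of $\cos2\alpha$ because $L_1$ acts on $\cos2\alpha$ as $2-16=-14$ and the mean is fixed to be zero.
\item At order $\varepsilon^3$: the solvability condition $\langle \lambda_2\cos\alpha+2N_2(\cos\alpha,U_2)+N_3(\cos,\cos,\cos),\cos\alpha\rangle=0$ determines $\lambda_2$.
\end{itemize}
To compute $N_2$ and $N_3$, expand $J^{-5/2}$ and $J^{-7/2}$ using $J=1+2|D|U+(|D|U)^2+U_\alpha^2$. Then track the $\cos\alpha$ coefficient of each of the six terms in $G$. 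Cosines are even, so $|D|\cos k\alpha=k\cos k\alpha$ and $H\cos k\alpha=\sin k\alpha$, and every product reduces to finite trigonometric sums.

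\textbf{Main obstacle.} The hard part is this cubic bookkeeping: many terms contribute to the $\cos\alpha$ mode, including the quadratic--quadratic interaction through $U_2$ and the cubic elastic term $\frac12\kappa^3$. Extracting $-\frac{57}{256}$ exactly requires care. As a consistency check, I would compare the result with the NLS coefficient $\frac{57}{512}$ in \eqref{LambdaNLS}. The plane-wave NLS solution $U=ae^{-i\frac{57}{512}|a|^2t}$ gives a Stokes frequency shift proportional to $\frac{57}{512}|a|^2$. With $W\approx\varepsilon ie^{-i\alpha}$ and $c=1-\frac{57}{512}\varepsilon^2+\dots$, this should match $\Lambda_2/2=-\frac{57}{512}$, which confirms the sign and size.
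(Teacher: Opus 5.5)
Your existence argument is the paper's argument: Crandall--Rabinowitz at $\lambda_0=1$ with kernel spanned by $\cos\alpha$, followed by $W_{ST}=HU+iU$ and $Q_{ST}=-cW_{ST}$. The paper never derives the law $c^2/\sigma=1-\frac{57}{256}\varepsilon^2$; it simply imposes it, for compatibility with the NLS scaling in Section~\ref{s:Modulation}. That law is the only quantitative content of the theorem beyond Crandall--Rabinowitz, because it is what makes the first harmonic exactly $\varepsilon$. Your proposal does not supply it either: the Lyapunov--Schmidt step is only outlined. Your consistency check also points the other way. For \eqref{LambdaNLS}, the flat solution $U=ae^{-i\frac{57}{512}|a|^2t}$ gives $Y^\epsilon=\epsilon a\,e^{-i(x+(1+\frac{57}{512}\epsilon^2|a|^2)t)}$. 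That means $c=1+\frac{57}{512}\varepsilon^2$, which has the opposite sign from the one you say it confirms.

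More seriously, the deferred computation does not give $-\frac{57}{256}$. Apply your scheme to the paper's $G$ with $U=\varepsilon\cos\alpha+\varepsilon^2u_2\cos2\alpha+\varepsilon^3U_3$ and $U_3\perp\cos\alpha$.
\begin{itemize}
\item At second order, $N_2(\cos\alpha,\cos\alpha)=\frac{13}{2}\cos2\alpha$, so $u_2=\frac{13}{28}$.
\item At third order, the $\cos\alpha$-coefficient of $G$ is $\lambda_2+13u_2-\frac{21}{4}$, so $\Lambda_2=-\frac{11}{14}$.
\end{itemize}
An independent check uses the steady Bernoulli condition $\frac{c^2}{2}e^{-2\tau}+\sigma(\kappa_{ss}+\frac12\kappa^3)=\mathrm{const}$. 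Here the relative surface speed is $ce^{-\tau}$, with $\tau+i\theta=\log(1+W_\alpha)$ and $\kappa=e^{-\tau}\theta_\alpha$. Expanding it gives the same values $\frac{13}{28}$ and $-\frac{11}{14}$. When the pressure law is swapped, the same expansion reproduces Stokes' $c^2=g(1+\varepsilon^2)$ and Crapper's $c^2=\sigma(1-\varepsilon^2/8)$, which supports the method.

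So your route yields existence with $c^2/\sigma=1-\frac{11}{14}\varepsilon^2+O(\varepsilon^3)$ and profile $\varepsilon ie^{-i(\alpha+ct)}+O(\varepsilon^2)$. If you impose $c^2/\sigma=1-\frac{57}{256}\varepsilon^2$ instead, the first harmonic becomes $\sqrt{399/1408}\,\varepsilon+O(\varepsilon^2)$, not $\varepsilon$. Hence the step you call ``extracting $-\frac{57}{256}$'' would fail.

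Finally, be careful if you verify the traveling-wave ansatz directly in \eqref{HF14} rather than citing the variational derivation. The bracket there equals $2i\kappa$, so the printed cubic term $-\frac{i}{2}\sigma\mathbf{P}[\cdot]^3$ encodes $\kappa_{ss}-2\kappa^3$. That is not the $\kappa_{ss}+\frac12\kappa^3$ of \eqref{Elastic} encoded by the Babenko energy, so that direct check fails at cubic order.
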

In the following sections, we assume $\sigma = 1$ for simplicity.
In addition, we will choose
\begin{equation*}
    \varepsilon = \sqrt{\frac{512}{57}}\epsilon,
\end{equation*}
so that the amplitude of the Stokes waves will be compatible with the Stokes of the cubic NLS equation \eqref{LambdaNLS} after rescaling in Section \ref{s:Modulation}.
The choice $c^2 = 1 -\frac{57}{256}\varepsilon^2$ gives $c \approx 1 -\frac{57}{512}\varepsilon^2$ or $c\approx -1 + \frac{57}{512}\varepsilon^2$.
This choice of $c$ will later be needed in the estimate \eqref{512Constant}.
We will just consider the case $c>0$, the other case can be obtained by using the time-reversal transformation $\hat{W}(t,\al)=W(-t,\al)$, $\hat{Q}(t,\al)=-Q(-t,\al)$.

\section{The NLS approximation of the hydroelastic waves} \label{s:Approximation}
In this section, we prove the first main result of this paper, that the hydroelastic wave system \eqref{HF14} can be approximated by cubic focusing NLS equation on the cubic lifespan.

We begin with a heuristic discussion for the linearized hydroelastic waves around the zero solution.
Recall that the system is \eqref{ZeroLinear}.

The dispersion relation for this linear evolution has two branches. 
Let $\tau$ be the Fourier variable in time and $\xi$ be the Fourier variable in space.
Then $\tau$ and $\xi$ satisfy 
\begin{equation*}
 \tau = \omega_{\pm}(\xi) : = \pm |\xi|^{\frac{5}{2}}, \quad \xi <0.
\end{equation*}
As discussed in \cite{ifrim2019nls}, the NLS approximation
applies to solutions which are localized on a single branch near a single frequency $\xi_0$.
In view of the frequency localization of the Stokes waves proved in Theorem \ref{t:Existence}, we choose $\xi_0 = -1$ and the $-$ sign.

Using the second-order Taylor expansion for $\omega_-(\xi)$ near $\xi_0 = -1$, we obtain the second-order approximation of the dispersion relation:
\begin{equation*}
    \tau = \omega_0(\xi) : = -1 + \frac{5}{2}(\xi+1) - \frac{15}{8}(\xi+1)^2.
\end{equation*}
The corresponding linear flow
\begin{equation*}
 (i\partial_t + \omega_0(D))y = 0
\end{equation*}
can be recast as the linear Schr\"odinger flow
\begin{equation} \label{LinearSchrodinger}
\left(i\partial_t + \frac{15}{8}\p_x^2 \right) u = 0
\end{equation}
via the transformation 
\begin{equation*}
    y(t,x) = e^{-i(t+x)}u\left(t, x+ \frac{5}{2}t \right).
\end{equation*}
Hence, for $y$ as the solution of the linear flow, $(w, q)$ can be approximated by $(w,q) \approx (y,y)$ for solutions near frequency $\xi_0 =-1$ on the negative branch.

Since $\omega_0$ is only a second-order approximation of $\omega_-$, if the frequencies of the solution are concentrated in an $\epsilon-$neighborhood of $\xi_0 = -1$, we have the difference relation 
\begin{equation*}
    |\omega_0 - \omega_+|\lesssim \epsilon^3.
\end{equation*}
The linear evolution $e^{it\omega_0(D)}$ is therefore a good approximation of $e^{it \omega_-(D)}$ on the quartic time scale $|t|\ll \epsilon^{-3}$.
For $u_0 = O(1)$, 
\begin{equation*}
    |e^{it\omega_0(D)} u_0 - e^{it \omega_-(D)} u_0| \lesssim \epsilon, \quad \text{on } |t|\lesssim \epsilon^{-2}.
\end{equation*}

Next, we take into account the nonlinear source terms of the hydroelastic waves.
Inspired by the result of the cubic lifespan of the hydroelastic waves in \cite{MR4846707}, we consider the cubic NLS equation
\begin{equation} \label{CubicNls}
 \left(i\partial_t + \frac{15}{8}\partial_x^2 \right) U = \lambda U|U|^2,
\end{equation}
for some real constant $\lambda$ which will later shown to be $\frac{57}{512}$.
We rescale the functions,
\begin{equation} \label{YURelation}
 U^{\epsilon}(t,x) : = \epsilon U(\epsilon^2 t, \epsilon x), \quad  Y^\epsilon(t,x) = e^{-i(t+x)}U^{\epsilon}\left(t, x+ \frac{5}{2}t \right).
\end{equation}
Then the new rescaled variable $Y^\epsilon$ solves the cubic nonlinear dispersive equation
\begin{equation} \label{CubicEpsilonDispersive}
 (i\partial_t + \omega_0(D))Y^\epsilon = \lambda Y^\epsilon |Y^\epsilon|^2.   
\end{equation}

The first main result in this paper Theorem \ref{t:NLSApproximation} gives the approximation result which shows that the cubic nonlinear dispersive equation \eqref{CubicEpsilonDispersive} is a good approximation of the hydroelastic waves \eqref{HF14}.
This section is devoted to the proof of this result, which is divided into three steps.
We begin with the cubic NLS equation \eqref{CubicNls}. 
In the first step, we truncate $U$ in frequency and obtain the corresponding $\tilde{Y}^\epsilon$.
For the second step, we use the normal form analysis from \cite{MR4846707} to construct a good approximate
solution to the hydroelastic waves which is close to $(\tilde{Y}^\epsilon, \tilde{Y}^\epsilon)$.
Finally, we show that the approximate solutions $(W^\epsilon, Q^\epsilon)$ can be replaced by the exact solutions $(W, Q)$.

\subsection{The NLS truncation}
In the first step, we perform the frequency truncation for the NLS equation \eqref{CubicNls}.
The approach is the same as in Section 2 of \cite{ifrim2019nls}, and we recall the results there.

Let $U$ solve the cubic NLS equation \eqref{CubicNls}, we define the frequency-truncated variable
\begin{equation*}
    \tilde{U} : = P_{\leq c\epsilon^{-1}}U,
\end{equation*}
where $P_{\leq c\epsilon^{-1}}$ is the frequency projection operator that selects frequencies $\leq c\epsilon^{-1}$ for some small constant $c$.
Then, $\tilde{U}$ solves the equation
\begin{equation*}
  \left(i\partial_t + \frac{15}{8}\partial_x^2 \right) \tilde{U} = \lambda \tilde{U}|\tilde{U}|^2 + \tilde{f},\quad \tilde{f}: = \left(i\partial_t + \frac{15}{8}\partial_x^2 \right) \tilde{U} - \lambda \tilde{U}|\tilde{U}|^2.
\end{equation*}
Then, we have the following result from Proposition $1.5$ in \cite{ifrim2019nls}.
\begin{proposition}[\hspace{1sp}\cite{ifrim2019nls}]
Let $U$ be a solution of \eqref{CubicNls} with initial data $\|U_0\|_{H^s} \lesssim 1$ for some $s\geq \frac{3}{2}$, then for $\tilde{U}$ that we define above, we have
\begin{enumerate}
\item The uniform bounds
\begin{equation*}
    \|\tilde{U}\|_{H^k} \lesssim \epsilon^{s-k}, \quad k \geq s.
\end{equation*}
\item The difference bounds
\begin{equation*}
 \|U - \tilde{U} \|_{L^2} \lesssim \epsilon^{s}.
\end{equation*}
\item The error estimates
\begin{equation*}
\|\tilde{f}\|_{L^2} \lesssim \epsilon^{s+1}.
\end{equation*}
\end{enumerate}
\end{proposition}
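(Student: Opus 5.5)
The plan is to carry out all estimates on the fixed NLS time interval $[0,T]$ and to reduce them to two ingredients: a uniform $H^s$ bound for $U$, and frequency-localized product and commutator estimates. I take $P_{\leq c\epsilon^{-1}}$ to be a smooth Littlewood--Paley multiplier, and write $P_{>}:=1-P_{\leq c\epsilon^{-1}}$.

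\textbf{Step 1: uniform $H^s$ bound.} The cubic NLS \eqref{CubicNls} in one dimension is globally well-posed in $L^2$, with conservation of mass. For $s\geq 1$ it also conserves energy, and persistence of regularity holds. Hence
\[
\sup_{t\in[0,T]}\|U(t)\|_{H^s}\leq C(\|U_0\|_{H^s},T)\lesssim 1 .
\]
All implicit constants below are allowed to depend on $T$.

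\textbf{Step 2: bounds (1) and (2).} These follow directly from Bernstein's inequality. For the uniform bounds,
\[
\|\tilde U\|_{H^k}\lesssim (c\epsilon^{-1})^{k-s}\|U\|_{H^s}\lesssim \epsilon^{s-k}, \qquad k\geq s .
\]
For the difference bound,
\[
\|U-\tilde U\|_{L^2}=\|P_{>}U\|_{L^2}\lesssim \epsilon^{s}\|U\|_{H^s}.
\]

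\textbf{Step 3: the error $\tilde f$.} Since $P_{\leq c\epsilon^{-1}}$ commutes with $i\partial_t+\frac{15}{8}\partial_x^2$, we have
\[
\tilde f=\lambda\big(P_{\leq c\epsilon^{-1}}(U|U|^2)-\tilde U|\tilde U|^2\big).
\]
A naive estimate loses here. Bounding $\|U-\tilde U\|_{L^2}\|U\|_{L^\infty}^2$ gives only $\epsilon^{s}$, and applying $\|P_{>}G\|_{L^2}\lesssim\epsilon^{s+1}\|G\|_{H^{s+1}}$ to $G=\tilde U|\tilde U|^2$ also gives only $\epsilon^s$, since $\|\tilde U\|_{H^{s+1}}\sim\epsilon^{-1}$. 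To gain the extra $\epsilon$, I split
\[
U=U_{lo}+U_{hi},\qquad U_{lo}:=P_{\ll \epsilon^{-1}}U,
\]
so that $U_{hi}$ collects the frequencies $\gtrsim\epsilon^{-1}$. I then expand the trilinear expression, treating $|U|^2U=N(U,\bar U,U)$ as a trilinear form.

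\emph{(a) Purely low terms.} $N(U_{lo})$ has frequency $\ll c\epsilon^{-1}$ and $P_{\leq c\epsilon^{-1}}U_{lo}=U_{lo}$. These terms therefore cancel exactly.

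\emph{(b) Terms with exactly one high factor.} Up to a permutation of the factors, these combine into a commutator
\[
[P_{\leq c\epsilon^{-1}},\,U_{lo}\bar U_{lo}]\,U_{hi},
\]
plus analogous terms with the conjugate placed on the high factor. The standard commutator estimate for multipliers at scale $\epsilon^{-1}$ gives
\[
\|[P_{\leq c\epsilon^{-1}},a]g\|_{L^2}\lesssim \epsilon\|\partial_x a\|_{L^\infty}\|g\|_{L^2}.
\]
Using $\|U_{hi}\|_{L^2}\lesssim\epsilon^s$ and $\|\partial_x(U_{lo}\bar U_{lo})\|_{L^\infty}\lesssim\|U\|_{H^s}^2$ (valid since $s\geq 3/2$), this contributes $O(\epsilon^{s+1})$.

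\emph{(c) Terms with at least two high factors.} These come both from $P_{\leq c\epsilon^{-1}}(U|U|^2)$ and from $\tilde U|\tilde U|^2$, and I estimate them crudely by
\[
\|U_{hi}\|_{L^2}\,\|U_{hi}\|_{L^\infty}\,\|U\|_{L^\infty}\lesssim \epsilon^{s}\cdot\epsilon^{s-\frac12}.
\]
Here $\|U_{hi}\|_{L^\infty}\lesssim\epsilon^{s-\frac12}$ by Bernstein and Sobolev embedding. The product is $\lesssim \epsilon^{s+1}$ exactly because $s\geq \frac32$, which explains the threshold in the hypothesis.

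\textbf{Expected obstacle.} The main difficulty is the bookkeeping in Step 3. One must check that every one-high-factor term really pairs into a commutator. The subtle point is that in $\tilde U|\tilde U|^2$ the high part is $P_{\leq c\epsilon^{-1}}U_{hi}$ rather than $U_{hi}$. One must also check that mixed terms involving $\tilde U_{hi}$ versus $U_{hi}$ either cancel or already carry two high factors. I would organize this by writing
\[
\tilde f=\lambda\big([P_{\leq c\epsilon^{-1}},N](U)\big)+\lambda\big(N(P_{\leq c\epsilon^{-1}}U+P_{>}U)-N(P_{\leq c\epsilon^{-1}}U)\big)_{\text{two-high}},
\]
and then applying the commutator bound and the bilinear high--high bound term by term.
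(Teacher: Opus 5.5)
The paper gives no proof of this proposition; it quotes it from Ifrim--Tataru. Your argument is therefore a self-contained substitute rather than a competing route, and its structure is the natural one. Persistence of regularity gives $\sup_{[0,T]}\|U\|_{H^s}\lesssim 1$, and Bernstein gives (1) and (2). In (3), expanding $\tilde f=\lambda\big(P(U|U|^2)-\tilde U|\tilde U|^2\big)$ with $\tilde U=U_{lo}+PU_{hi}$ leaves exactly two kinds of terms. The first are the commutators $2[P,|U_{lo}|^2]U_{hi}+[P,U_{lo}^2]\overline{U_{hi}}$; this step uses that the symbol of $P$ is real and even, so that $\overline{PU_{hi}}=P\overline{U_{hi}}$. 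The second are terms with at least two high factors, which you correctly bound by $\epsilon^{2s-\frac12}\le\epsilon^{s+1}$. Your choice of a smooth cutoff is essential. For a sharp projector the kernel has no first moment, and a packet straddling $c\epsilon^{-1}$ makes the one-high term of size $\epsilon^{s}$. Your closing display double counts, since $PN(U)-N(PU)$ is already all of $\tilde f/\lambda$; the expansion in (a)--(c) is the correct bookkeeping.

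There is one genuine gap, at the endpoint $s=\frac32$. In (b) you use $\|\partial_x(U_{lo}\overline{U_{lo}})\|_{L^\infty}\lesssim\|U\|_{H^s}^2$, which needs $H^{s-1}\hookrightarrow L^\infty$, i.e.\ $s>\frac32$. At $s=\frac32$, Bernstein over the $\sim\log(1/\epsilon)$ dyadic blocks only gives $\|\partial_x U_{lo}\|_{L^\infty}\lesssim\sqrt{\log(1/\epsilon)}\,\|U\|_{H^{3/2}}$, hence $\|\tilde f\|_{L^2}\lesssim\epsilon^{5/2}\sqrt{\log(1/\epsilon)}$.

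This loss appears to be genuine rather than an artifact of the method. Write $P=\chi(\epsilon D)$. Take $U_{lo}$ real with $\partial_xU_{lo}\approx\sqrt{\log(1/\epsilon)}$ on an interval of length $O(\epsilon)$; for instance, let $\partial_xU_{lo}$ be a sum over dyadic $2^k\le c'\epsilon^{-1}$ of the scale-invariant $\dot H^{1/2}$ bumps $\Phi'(2^kx)$ with coefficients $(\log(1/\epsilon))^{-1/2}$. On that interval, place a packet $U_{hi}$ of amplitude $\epsilon$ and width comparable to $\epsilon$, at a frequency where $\chi'(\epsilon\xi)\neq0$. Then:
\begin{itemize}
\item $\|U_0\|_{H^{3/2}}\lesssim 1$;
\item the leading part $-i\epsilon\,\partial_x|U_{lo}|^2\,\chi'(\epsilon D)U_{hi}$ of the commutator has $L^2$ norm comparable to $\epsilon^{5/2}\sqrt{\log(1/\epsilon)}$;
\item every other contribution to $\tilde f(0)$ either lives at a different frequency or is $O(\epsilon^{7/2})$.
\end{itemize}
So item (3) should be read for $s>\frac32$, or with a logarithmic loss at $s=\frac32$. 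For $s>\frac32$ — in particular $s=3$, which is all the paper uses — your proof is complete.
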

We further define $\tilde{Y}^\epsilon$ by
\begin{equation*}
    \tilde{Y}^\e(t,x):=e^{-i(t+x)}\epsilon\tilde{U}\left(\epsilon^2t, \e x+ \frac{5}{2}\e t \right).
\end{equation*}
We get by scaling,
\begin{equation} \label{TildeYEpsilonBound}
\|\tilde{Y}^\epsilon \|_{L^2} \lesssim \epsilon^\f12, \quad \|\tilde{Y}^\epsilon \|_{L^\infty} \lesssim \epsilon.
\end{equation}
The smallness of $c$ above
guarantees that $\tilde{Y}^{\e}$
is frequency localized in a small neighborhood of $\xi_0=-1$.
We then have
\begin{equation*}
\|(D+1)\tilde{Y}^{\e}\|_{L^2}\lesssim \epsilon^\f32.
\end{equation*}
By the above proposition, 
\[
\| \tilde{Y}^\epsilon - Y^\epsilon \|_{H^{\frac{5}{2}}} \lesssim \epsilon^{s-\frac{5}{2}}, \quad s \geq \frac{5}{2}.
\]
In addition, $\tilde{Y}^\epsilon$ is an approximate solution to \eqref{CubicEpsilonDispersive}, and it solves 
\begin{equation} \label{TildeYEpsilon}
(i \partial_t + \omega_0(D)) \tilde{Y}^\epsilon = \lambda \tilde{Y}^\epsilon |\tilde{Y}^\epsilon|^2 + g^\epsilon,    
\end{equation}
with the error
\[
g^\epsilon := (i \partial_t + \omega_0(D)) \tilde{Y}^\epsilon - \lambda \tilde{Y}^\epsilon |\tilde{Y}^\epsilon|^2
\]
satisfying the bound $\| g^\epsilon \|_{L^2} \lesssim \epsilon^{s+\frac{7}{2}}$.
Later, we will just consider $s = 3$.

\subsection{Normal form analysis and NLS approximation}
In the second step, we construct a good approximate solution $(W^\e, Q^\e)$ to the hydroelastic waves that is close to $(\tilde{Y}^\epsilon, \tilde{Y}^\epsilon)$.

It follows from the computations in \cite{MR4846707} that one can expand the nonlinear terms in \eqref{HF14}, and rewrite the system as
\begin{equation}\label{NF1}
\begin{cases}		W_t+Q_{\al}=G:=G^{(2)}+G^{(3)}+G^{(4+)}\,,\\
Q_t-iW_{\al\al\al\al}=K:=K^{(2)}+K^{(3)}+K^{(4+)}\,,
		\end{cases}
	\end{equation}
where the quadratic terms $\left(G^{(2)},K^{(2)}\right)$ are given by
	\begin{equation}\label{NF2}
		\begin{cases}
			G^{(2)}:=\nP[Q_{\al}\bar{W}_{\al}-\bar{Q}_{\al}W_{\al}]\,,\\
			K^{(2)}:=-Q^2_{\al}-\nP[|Q_{\al}|^2]-i\nP[\f52W_{\al}W_{\al\al\al\al}+5W_{\al\al}W_{\al\al\al}+\f32\bar{W}_{\al}W_{\al\al\al\al}\\
			\qquad\quad \ \ +\bar{W}_{\al\al}W_{\al\al\al}-W_{\al\al}\bar{W}_{\al\al\al}-\f32W_{\al}\bar{W}_{\al\al\al\al}],
		\end{cases}
	\end{equation}
and the cubic terms $\left(G^{(3)},K^{(3)}\right)$ read
	\begin{equation}\label{NF3}
		\begin{cases}
			G^{(3)}:=&W_{\al}(Q_{\al}W_{\al}-\nP[\bar{Q}_{\al}W_{\al}-Q_{\al}\bar{W}_{\al}])-\nP[(Q_{\al}-\bar{Q}_{\al})(4|\mathfrak{R}W_{\al}|^2-|W_{\al}|^2),
			\\
			K^{(3)}:=&Q_{\al}(Q_{\al}W_{\al}-\nP[\bar{Q}_{\al}W_{\al}-Q_{\al}\bar{W}_{\al}])+2\nP[\mathfrak{R}W_{\al\al}|Q_{\al}|^2]
			+i\nP\Big[\f{35}{8}W^2_{\al}W_{\al\al\al\al}\\
			&+\f74|W_{\al}|^2W_{\al\al\al\al}+\f{15}{8}\bar{W}^2_{\al}W_{\al\al\al\al}+\f{35}{2}W_{\al}W_{\al\al}W_{\al\al\al}
			+\f{15}{2}\bar{W}_{\al}W_{\al\al}W_{\al\al\al}\\
			&+\f{15}{4}W_{\al}\bar{W}_{\al\al}W_{\al\al\al}+\f{15}{4}\bar{W}_{\al}\bar{W}_{\al\al}W_{\al\al\al}
			+\f54W_{\al}W_{\al\al}\bar{W}_{\al\al\al}+\f54\bar{W}_{\al}W_{\al\al}\bar{W}_{\al\al\al}\\
			&+\f92W^3_{\al\al}+\f52W_{\al\al}|W_{\al\al}|^2+\bar{W}_{\al\al}|W_{\al\al}|^2
			-\f74|W_{\al}|^2\bar{W}_{\al\al\al\al}
			-\f{15}{8}W^2_{\al}\bar{W}_{\al\al\al\al}\\
			&-\f{15}{2}W_{\al}\bar{W}_{\al\al}\bar{W}_{\al\al\al}
			-\f{15}{4}\bar{W}_{\al}W_{\al\al}\bar{W}_{\al\al\al}-\f{15}{4}W_{\al}W_{\al\al}\bar{W}_{\al\al\al}\\
			&-\f54\bar{W_{\al}}\bar{W}_{\al\al}W_{\al\al\al}-\f54W_{\al}\bar{W}_{\al\al}W_{\al\al\al}
			-\f52\bar{W}_{\al\al}|W_{\al\al}|^2-W_{\al\al}|W_{\al\al}|^2\Big]\\
			&+\f{i}{2}\nP[W^3_{\al\al}-3W_{\al\al}|W_{\al\al}|^2+3\bar{W}_{\al\al}|W_{\al\al}|^2].
		\end{cases}
	\end{equation}
$\left(G^{(4+)},K^{(4+)}\right)$ are quartic and higher terms in derivatives of $W$ and $Q$.
As in \cite{MR4846707}, the quadratic terms $G^{(2)}$ and $K^{(2)}$ can be eliminated using a normal form transformation
\begin{equation}\label{NFP1-1}
\begin{cases}
\tilde{W}=W+W_{[2]}\,,\\
\tilde{Q}=Q+Q_{[2]}\,,
\end{cases}
\end{equation}
		where the bilinear forms $(W_{[2]},Q_{[2]})$ are
\begin{equation}\label{NFP1-2}
\begin{aligned}
&W_{[2]}=B^h(W,W)+C^h(Q,Q)+B^a(W,\bar{W})+C^a(Q,\bar{Q})\,,\\
&Q_{[2]}=A^h(W,Q)+A^a(W,\bar{Q})+D^a(Q,\bar{W}).
			\end{aligned}
		\end{equation}
The bilinear symbols of these bilinear forms are as follows:
		\begin{equation*}
			\begin{aligned}
			&A^h=i\f{5\xi^7-\f{5}{2}\xi^6\eta-40\xi^5\eta^2-95\xi^4\eta^3-121\xi^3\eta^4-100\xi^2\eta^5-50\xi\eta^6-\f{25}{2}\eta^7}
				{25(\xi^3+2\xi^2\eta+2\xi\eta^2+\eta^3)^2-4\xi^3\eta^3}\,,\\
				&B^h=-i(\xi+\eta)\f{\f{25}{4}(\xi^3+2\xi^2\eta+2\xi\eta^2+\eta^3)^2-2\xi^3\eta^3}{25(\xi^3+2\xi^2\eta+2\xi\eta^2+\eta^3)^2-4\xi^3\eta^3}\,,\\
				&C^h=-i\f{\f52(\xi+\eta)(\xi^3+2\xi^2\eta+2\xi\eta^2+\eta^3)}{25(\xi^3+2\xi^2\eta+2\xi\eta^2+\eta^3)^2-4\xi^3\eta^3},
			\end{aligned}
		\end{equation*}
		and
\begin{equation*}
\begin{aligned}
&A^a=i\f{10\xi^8+45\xi^7\eta+112\xi^6\eta^2+154\xi^5\eta^3+62\xi^4\eta^4-140\xi^3\eta^5-240\xi^2\eta^6-155\xi\eta^7-40\eta^8}{2\xi(25\xi^6+100\xi^5\eta+200\xi^4\eta^2+246\xi^3\eta^3+200\xi^2\eta^4+100\xi\eta^5+25\eta^6)}\,,\\
&B^a=i\f{-25\xi^8-150\xi^7\eta-420\xi^6\eta^2-722\xi^5\eta^3-822\xi^4\eta^4-620\xi^3\eta^5-293\xi^2\eta^6-76\xi\eta^7-8\eta^8}{2\xi(25\xi^6+100\xi^5\eta+200\xi^4\eta^2+246\xi^3\eta^3+200\xi^2\eta^4+100\xi\eta^5+25\eta^6)}\,,\\
&C^a=-i\f{5\xi^5+15\xi^4\eta+16\xi^3\eta^2+3\xi^2\eta^3-7\xi\eta^4-4\eta^5}
{\xi(25\xi^6+100\xi^5\eta+200\xi^4\eta^2+246\xi^3\eta^3+200\xi^2\eta^4+100\xi\eta^5+25\eta^6)}\,,\\
&D^a=-i\f{25\xi^8+125\xi^7\eta+320\xi^6\eta^2+542\xi^5\eta^3+662\xi^4\eta^4+580\xi^3\eta^5+345\xi^2\eta^6+125\xi\eta^7+20\eta^8}
{2\xi(25\xi^6+100\xi^5\eta+200\xi^4\eta^2+246\xi^3\eta^3+200\xi^2\eta^4+100\xi\eta^5+25\eta^6)}.
\end{aligned}
\end{equation*}

Using the normal form variables $(\tilde{W}, \tilde{Q})$, the system \eqref{NF1} becomes 
\begin{equation*}
\begin{cases}		\tilde{W}_t+\tilde{Q}_{\al}=\tilde{G}_r^{(3)}+ \tilde{G}_{nr}^{(3)}+\tilde{G}_{null}^{(3)}+ \tilde{G}^{(4+)}\,,\\
\tilde{Q}_t-i\tilde{W}_{\al\al\al\al}= \tilde{K}_r^{(3)}+ \tilde{K}_{nr}^{(3)}+  \tilde{K}_{null}^{(3)}+ \tilde{K}^{(4+)},
		\end{cases}
\end{equation*}
where $(\tilde{G}_r^{(3)}, \tilde{K}_r^{(3)})$ are cubic resonant terms, $(\tilde{G}_{nr}^{(3)}, \tilde{K}_{nr}^{(3)})$ are cubic non-resonant terms, $(\tilde{G}_{null}^{(3)}, \tilde{K}_{null}^{(3)})$ are cubic null terms, and $(\tilde{G}^{(4+)}, \tilde{K}^{(4+)})$ are quartic and higher terms.
The cubic non-resonant terms can be eliminated by a further cubic normal form transformation, so there exist normal form variables $(\tilde{\tilde{W}}, \tilde{\tilde{Q}}): = (\tilde{W}, \tilde{Q}) + (W_{[3]}, Q_{[3]})$ that solve the system
\begin{equation}\label{NF4}
\begin{cases}		\tilde{\tilde{W}}_t+\tilde{\tilde{Q}}_{\al}=\tilde{G}_r^{(3)}+\tilde{G}_{null}^{(3)}+ \tilde{\tilde{G}}^{(4+)},\\
\tilde{\tilde{Q}}_t-i\tilde{\tilde{W}}_{\al\al\al\al}= \tilde{K}_r^{(3)}+  \tilde{K}_{null}^{(3)}+ \tilde{\tilde{K}}^{(4+)},
		\end{cases}
\end{equation}
where $(W_{[3]}, Q_{[3]})$ are cubic normal forms.
The null terms vanish when applied to three linear waves for \eqref{ZeroLinear} which travel in the same direction.
Hence, only resonant terms matter for our analysis.

By direct computation in \cite{MR4846707}, the cubic resonant terms are given by
\begin{equation*}
\begin{cases}
\tilde{G}_{r}^{(3)}&=\f{i}{128}\p_\al^{-2}(K^{(2),h}\bar{Q})=-\f{i}{128}\p_\al^{-2}\left(Q_{\al}^2\bar{Q}+\f52iW_\al W_{\al\al\al\al}\bar{Q}+5iW_{\al\al}W_{\al\al\al}\bar{Q}\right), \\
\tilde{K}_r^{(3)}&=\f{49}{32}\bar{W}_{\al}K^{(2),h}+W_{\al}|Q_{\al}|^2
+i\left[\f74|W_{\al}|^2W_{\al\al\al\al}+\f{15}{2}\bar{W}_{\al}W_{\al\al}W_{\al\al\al}-\f{15}{8}W^2_{\al}\bar{W}_{\al\al\al\al}\right]\\
&=-\f{49}{32}\bar{W}_{\al}Q_{\al}^2+W_{\al}|Q_{\al}|^2-i\left(\f{133}{64}|W_{\al}|^2W_{\al\al\al\al}+\f{5}{32} \bar{W}_{\al}W_{\al\al}W_{\al\al\al}+\f{15}{8}W^2_{\al}\bar{W}_{\al\al\al\al}\right).
\end{cases}
\end{equation*}

Choosing the diagonal variables  
\begin{equation*}
Y^- = \f12(|D|^\f32\tilde{\tilde{W}}- \tilde{\tilde{Q}}), \quad Y^+ = \f12(|D|^\f32\tilde{\tilde{W}}+ \tilde{\tilde{Q}})
\end{equation*}
for the symmetrization, we obtain the approximate system 
\begin{equation} \label{YPMEqn}
\begin{cases}
(i\partial_t-|D|^\f52)Y^- \approx \frac{i}{2}|D|^{\f32}\tilde{G}_r^{(3)}- \frac{i}{2}\tilde{K}_r^{(3)}, \\
(i\partial_t+|D|^\f52)Y^+ \approx \frac{i}{2}|D|^{\f32}\tilde{G}_r^{(3)}+ \frac{i}{2}\tilde{K}_r^{(3)}.
\end{cases}
\end{equation}

As in the heuristic discussion at the beginning of this section, we are interested in solutions that are localized near frequency $\xi_0 = -1$ along the branch $\tau = - |\xi|^\f52$.
Hence, we discard the second equation in \eqref{YPMEqn} and set $\p_\al = \pm i$ ($+$ for holomorphic variables and $-$ for anti-holomorphic variables), and $|D| = 1$ on the first equation of \eqref{YPMEqn} so that this corresponds to $\xi_0 = -1$.

Writing 
\begin{equation*}
   |D|^{\f32} \tilde{\tilde{W}} = Y^- + Y^+, \quad \tilde{\tilde{Q}} = Y^+ - Y^-.
\end{equation*}
Setting $Y^+ = 0$ and substituting above relations in the first equation of \eqref{YPMEqn}, we obtain an approximate equation for $Y^-$:
\begin{equation} \label{YMinusEqn}
 (i\partial_t-|D|^\f52)Y^- \approx  \f{57}{512} Y^- |Y^-|^2.
\end{equation}

We now begin with $\tilde{Y}^\epsilon$, which is the solution of the equation \eqref{TildeYEpsilon} with $\lambda = \frac{57}{512}$, and construct approximate solutions $(W^\epsilon, Q^\epsilon)$.
Owing to the equation \eqref{YMinusEqn}, we first choose
\begin{equation*}
    Y^- = \tilde{Y}^\epsilon.
\end{equation*}

We then exploit the second equation in \eqref{YPMEqn} to choose $Y^+$.
Indeed, the second equation in \eqref{TildeYEpsilon} can be rewritten as
\begin{equation*}
  (i\partial_t+|D|^\f52)Y^+ \approx  -\f{31}{512} Y^- |Y^-|^2.  
\end{equation*}
Since $(i\partial_t + |D|^{\f52}) \approx 2$ near $\xi_0 = 1$ along $\tau = - |\xi|^{\f52}$, we then choose
\begin{equation*}
    Y^+ = -\frac{31}{1024} \tilde{Y}^\epsilon |\tilde{Y}^\epsilon|^2.
\end{equation*}
The above choices of $Y^\pm$ lead to the approximate normal form variables $\tilde{\tilde{W}}^\epsilon$ and $\tilde{\tilde{Q}}^\epsilon$.
They are given by
\begin{equation} \label{DoubleTildeWQEpsilon}
 \tilde{\tilde{W}}^\epsilon = |D|^{-\f32}\left(\tilde{Y}^\epsilon -\frac{31}{1024} \tilde{Y}^\epsilon |\tilde{Y}^\epsilon|^2\right), \quad \tilde{\tilde{Q}}^\epsilon = -\tilde{Y}^\epsilon-\frac{31}{1024} \tilde{Y}^\epsilon |\tilde{Y}^\epsilon|^2.
\end{equation}

 $(\tilde{\tilde{W}}^\epsilon, \tilde{\tilde{Q}}^\epsilon)$ that we obtained can be viewed as the approximate solutions of the system \eqref{NF4}.
 To obtain the approximate solution of the system \eqref{NF1}, we invert the quadratic and cubic normal form corrections up to cubic terms.
Hence, we get
\begin{equation} \label{WQeDef}
\begin{cases}
W^{\e}=&\tilde{\tilde{W}}^\epsilon-\tilde{\tilde{W}}^{\e}_{[2]}-B^h(\tilde{\tilde{W}}^{\e}_{[2]},\tilde{\tilde{W}}^\epsilon)-B^h(\tilde{\tilde{W}}^{\e},\tilde{\tilde{W}}^\epsilon_{[2]})-C^h(\tilde{\tilde{Q}}^\epsilon_{[2]},\tilde{\tilde{Q}}^\epsilon)
-C^h(\tilde{\tilde{Q}}^\epsilon,\tilde{\tilde{Q}}^\epsilon_{[2]})\\
&-B^a(\tilde{\tilde{W}}^\epsilon_{[2]},\overline{\tilde{\tilde{W}}^\epsilon})-B^a(\tilde{\tilde{W}}^\epsilon,\overline{\tilde{\tilde{W}}^\epsilon_{[2]}})-C^a(\tilde{\tilde{Q}}^\epsilon_{[2]},\overline{\tilde{\tilde{Q}}^\epsilon})-C^a(\tilde{\tilde{Q}}^\epsilon,\overline{\tilde{\tilde{Q}}^\epsilon_{[2]}})-\tilde{\tilde{W}}^\epsilon_{[3]},\\
Q^{\e}=&\tilde{\tilde{Q}}^\epsilon -\tilde{\tilde{Q}}^{\e}_{[2]} -A^h(\tilde{\tilde{W}}^\epsilon_{[2]},\tilde{\tilde{Q}}^\epsilon) - A^h(\tilde{\tilde{W}}^\epsilon,\tilde{\tilde{Q}}^\epsilon_{[2]}) - A^a(\tilde{\tilde{W}}^\epsilon_{[2]},\overline{\tilde{\tilde{Q}}^\epsilon}) - A^a(\tilde{\tilde{W}}^\epsilon,\overline{\tilde{\tilde{Q}}^\epsilon_{[2]}}) \\
& - D^a(\tilde{\tilde{Q}}^\epsilon_{[2]},\overline{\tilde{\tilde{W}}^\epsilon}) - D^a(\tilde{\tilde{Q}}^\epsilon,\overline{\tilde{\tilde{W}}^\epsilon_{[2]}}) -\tilde{\tilde{Q}}^\epsilon_{[3]},
\end{cases}
\end{equation}
where approximate quadratic normal form corrections are
\begin{align*}
\tilde{\tilde{W}}^{\e}_{[2]}:=&B^h(\tilde{\tilde{W}}^\epsilon,\tilde{\tilde{W}}^\epsilon)+C^h(\tilde{\tilde{Q}}^\epsilon,\tilde{\tilde{Q}}^\epsilon)+B^a(\tilde{\tilde{W}}^\epsilon,\overline{\tilde{\tilde{W}}^\epsilon})+C^a(\tilde{\tilde{Q}}^\epsilon,\overline{\tilde{\tilde{Q}}^\epsilon}),\\
\tilde{\tilde{Q}}^{\e}_{[2]}:=& A^h(\tilde{\tilde{W}}^\epsilon,\tilde{\tilde{Q}}^\epsilon)+A^a(\tilde{\tilde{W}}^\epsilon,\overline{\tilde{\tilde{Q}}^\epsilon})+D^a(\tilde{\tilde{Q}}^\epsilon,\overline{\tilde{\tilde{W}}^\epsilon}).
\end{align*}
For approximate cubic normal form corrections $(\tilde{\tilde{W}}^\epsilon_{[3]}, \tilde{\tilde{Q}}^\epsilon_{[3]})$ above, we also replace $(W, Q)$ by $(\tilde{\tilde{W}}, \tilde{\tilde{Q}})$ in the original cubic normal form $(W_{[3]}, Q_{[3]})$.

To show that $(W^\epsilon, Q^\epsilon)$ are good approximation of the hydroelastic waves, we proceed in two steps.
We first show that the intermediate approximate solutions 
 $(\tilde{\tilde{W}}^\epsilon, \tilde{\tilde{Q}}^\epsilon)$ solve the system
 \begin{equation}\label{NF5}
\begin{cases}		\tilde{\tilde{W}}^\epsilon_t+\tilde{\tilde{Q}}^\epsilon_{\al}=\tilde{G}_r^{(3)}(\tilde{\tilde{W}}^\epsilon, \tilde{\tilde{Q}}^\epsilon)+\tilde{G}_{null}^{(3)}(\tilde{\tilde{W}}^\epsilon, \tilde{\tilde{Q}}^\epsilon)+ \tilde{g}^{\epsilon}\,,\\
\tilde{\tilde{Q}}^\epsilon_t-i\tilde{\tilde{W}}^\epsilon_{\al\al\al\al}= \tilde{K}_r^{(3)}(\tilde{\tilde{W}}^\epsilon, \tilde{\tilde{Q}}^\epsilon)+  \tilde{K}_{null}^{(3)}(\tilde{\tilde{W}}^\epsilon, \tilde{\tilde{Q}}^\epsilon)+ \tilde{k}^{\epsilon}\,,
\end{cases}
\end{equation}
where cubic source terms $(\tilde{G}_r^{(3)}, \tilde{K}_r^{(3)})$ and $(\tilde{G}_{null}^{(3)}, \tilde{K}_{null}^{(3)})$ are cubic resonant and null terms that are defined in \eqref{NF4}.
Note that $(\tilde{\tilde{W}}^\epsilon, \tilde{\tilde{Q}}^\epsilon)$ are frequency localized at $\xi \in (-2, 0)$. 
More precisely, we show that they have the following properties.
\begin{lemma}
For the approximate functions $(\tilde{\tilde{W}}^\epsilon, \tilde{\tilde{Q}}^\epsilon)$ that are defined in \eqref{DoubleTildeWQEpsilon}, they are close to $(\tilde{Y}^\epsilon, - \tilde{Y}^\epsilon)$ such that
\begin{equation} \label{DoulbleTildeWQYdiff}
 \|\tilde{\tilde{W}}^\epsilon - \tilde{Y}^\epsilon\|_{L^2} + \|\tilde{\tilde{Q}}^\epsilon + \tilde{Y}^\epsilon\|_{L^2} \lesssim \epsilon^\f32.
\end{equation}
Moreover, they are good approximate solutions in the sense that error terms $(\tilde{g}^\epsilon, \tilde{k}^\epsilon)$ in \eqref{NF5} are small such that
\begin{equation} \label{TildegkEst}
 \|\tilde{g}^\epsilon\|_{L^2} + \|\tilde{k}^\epsilon\|_{L^2} \lesssim \epsilon^{\f72}. 
\end{equation}
\end{lemma}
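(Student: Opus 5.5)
The plan is to prove the two bounds separately. Both rest on three facts about $\tilde{Y}^\epsilon$: it is frequency localized near $\xi_0=-1$ (inside $(-2,0)$), the estimates \eqref{TildeYEpsilonBound} hold, and $\|(D+1)^j\tilde{Y}^\epsilon\|_{L^2}\lesssim \epsilon^{j+\f12}$ for $0\le j\le 3$. This last bound follows by scaling from $\|\tilde U\|_{H^3}\lesssim 1$ (we take $s=3$). Because the spectrum stays away from $\xi=0$, every Fourier multiplier we use ($|D|^{\pm\f32}$, $|D|^{\f52}$, $\p_\al$) has a smooth symbol on the support. Each such multiplier can therefore be Taylor expanded around $\xi=-1$, and the remainder is controlled by powers of $(D+1)$.

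\textbf{Bound \eqref{DoulbleTildeWQYdiff}.} Write
\[
\tilde{\tilde W}^\epsilon-\tilde Y^\epsilon=(|D|^{-\f32}-1)\tilde Y^\epsilon-\tfrac{31}{1024}|D|^{-\f32}(\tilde Y^\epsilon|\tilde Y^\epsilon|^2).
\]
The symbol of $|D|^{-\f32}-1$ vanishes at $\xi=-1$, so on the support it equals $(\xi+1)$ times a bounded smooth function. The first term is thus bounded by $\|(D+1)\tilde Y^\epsilon\|_{L^2}\lesssim\epsilon^{\f32}$. For the cubic piece we use $\|\tilde Y^\epsilon|\tilde Y^\epsilon|^2\|_{L^2}\le\|\tilde Y^\epsilon\|_{L^\infty}^2\|\tilde Y^\epsilon\|_{L^2}\lesssim\epsilon^{\f52}$. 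The difference $\tilde{\tilde Q}^\epsilon+\tilde Y^\epsilon$ is exactly that cubic term, so it obeys the same bound.

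\textbf{Bound \eqref{TildegkEst}.} We pass to the diagonal variables $Y^\mp=\f12(|D|^{\f32}\tilde{\tilde W}^\epsilon\mp\tilde{\tilde Q}^\epsilon)$, which by construction are exactly $\tilde Y^\epsilon$ and $-\frac{31}{1024}\tilde Y^\epsilon|\tilde Y^\epsilon|^2$. The map $(\tilde g^\epsilon,\tilde k^\epsilon)\mapsto$ the residuals of the two diagonal equations is bounded on $L^2$ for these frequencies, so it suffices to bound each residual in $L^2$ by $\epsilon^{\f72}$.

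\emph{Residual of the $Y^-$ equation.} We compare with \eqref{TildeYEpsilon}, which gives
\[
(i\p_t-|D|^{\f52})\tilde Y^\epsilon=\tfrac{57}{512}\tilde Y^\epsilon|\tilde Y^\epsilon|^2+g^\epsilon-(|D|^{\f52}+\omega_0(D))\tilde Y^\epsilon .
\]
The symbol $|\xi|^{\f52}+\omega_0(\xi)$ vanishes to third order at $\xi=-1$, since $\omega_0$ is the second-order Taylor polynomial of $\omega_-$. Hence this term is bounded by $\|(D+1)^3\tilde Y^\epsilon\|_{L^2}\lesssim\epsilon^{\f72}$, and $\|g^\epsilon\|_{L^2}$ is even smaller. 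It remains to show that $\f i2|D|^{\f32}\tilde G^{(3)}_r-\f i2\tilde K^{(3)}_r$, evaluated at $(\tilde{\tilde W}^\epsilon,\tilde{\tilde Q}^\epsilon)$, equals $\frac{57}{512}\tilde Y^\epsilon|\tilde Y^\epsilon|^2$ up to $O(\epsilon^{\f72})$ in $L^2$. This takes three reductions:
\begin{itemize}[leftmargin=*]
\item First, replace $(\tilde{\tilde W}^\epsilon,\tilde{\tilde Q}^\epsilon)$ by $(|D|^{-\f32}\tilde Y^\epsilon,-\tilde Y^\epsilon)$. The change is a quintic contribution, bounded by $\epsilon^2\cdot\epsilon^{\f52}$.
\item Second, freeze every derivative at the central frequency: $\p_\al\to -i$ on holomorphic factors, $\p_\al\to i$ on conjugated factors, and $|D|\to1$. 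The trilinear symbols are smooth near $(-1,-1,1)$, so the remainder carries one factor of $(D+1)$. Estimating that factor in $L^2$ and the other two in $L^\infty$ gives $\epsilon^{\f32}\cdot\epsilon^2=\epsilon^{\f72}$.
\item Third, the frozen expression is an explicit constant times $\tilde Y^\epsilon|\tilde Y^\epsilon|^2$. This is the computation behind \eqref{YMinusEqn}, and it yields the constant $\frac{57}{512}$.
\end{itemize}
The null terms $\tilde G^{(3)}_{null},\tilde K^{(3)}_{null}$ appear on both sides of \eqref{NF5}, so they need no treatment.

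\emph{Residual of the $Y^+$ equation.} Write $\tilde Y^\epsilon=e^{-i(t+x)}V$, where $V$ varies slowly, with $\|\p_tV\|_{L^2}\lesssim\epsilon^{\f52}$ from the NLS scaling. Then $i\p_t(\tilde Y^\epsilon|\tilde Y^\epsilon|^2)=\tilde Y^\epsilon|\tilde Y^\epsilon|^2+O_{L^2}(\epsilon^{2}\cdot\epsilon^{\f52})$. Similarly $|D|^{\f52}(\tilde Y^\epsilon|\tilde Y^\epsilon|^2)=\tilde Y^\epsilon|\tilde Y^\epsilon|^2+O(\epsilon^{\f92})$, because the cubic is localized within $O(\epsilon)$ of frequency $-1$. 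Hence
\[
(i\p_t+|D|^{\f52})Y^+=-\tfrac{31}{512}\tilde Y^\epsilon|\tilde Y^\epsilon|^2+O(\epsilon^{\f72}).
\]
The same three reductions as above show that $\f i2|D|^{\f32}\tilde G^{(3)}_r+\f i2\tilde K^{(3)}_r=-\frac{31}{512}\tilde Y^\epsilon|\tilde Y^\epsilon|^2+O(\epsilon^{\f72})$, so this residual is also $O(\epsilon^{\f72})$.

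\textbf{Main obstacle.} The analytic estimates above are routine. The substantive step is the exact algebraic evaluation of the frozen resonant symbols. One substitutes $\p_\al=\mp i$ into each term of $\tilde G^{(3)}_r$ and $\tilde K^{(3)}_r$, including the $\p_\al^{-2}$ factor in $\tilde G^{(3)}_r$, and checks that the two diagonal combinations give exactly $\frac{57}{512}$ and $-\frac{31}{512}$. The other point needing care is verifying that each remainder really gains a full factor of $(D+1)$ or $\p_tV$. I would carry out the symbol evaluation term by term and confirm it against \eqref{YMinusEqn}.
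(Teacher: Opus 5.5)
Your overall route is the paper's. You replace $(\tilde{\tilde W}^\epsilon,\tilde{\tilde Q}^\epsilon)$ by $(|D|^{-\frac32}\tilde Y^\epsilon,-\tilde Y^\epsilon)$ in the cubic terms and freeze frequencies at $\xi_0=-1$. You then use that $\omega_0+|D|^{\frac52}$ vanishes to third order at $-1$, so this term costs $\|(D+1)^3\tilde Y^\epsilon\|_{L^2}\lesssim\epsilon^{\frac72}$. Working with diagonal residuals for $Y^\mp$, instead of computing $e_1,e_2$ directly in the $(W,Q)$ variables as the paper does, is only a cosmetic difference.

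\textbf{The gap: the null terms.} They do not appear on both sides of \eqref{NF5}. The left-hand side there is only the linear operator, so by definition
\[
\tilde g^\epsilon=e_1-\tilde G^{(3)}_{null}(\tilde{\tilde W}^\epsilon,\tilde{\tilde Q}^\epsilon),\qquad \tilde k^\epsilon=e_2-\tilde K^{(3)}_{null}(\tilde{\tilde W}^\epsilon,\tilde{\tilde Q}^\epsilon).
\]
Here $e_1,e_2$ are the residuals with only the resonant terms subtracted, and that is all your diagonal computation controls. A cubic expression in $\tilde Y^\epsilon$ is in general only of size $\epsilon^{\frac52}$ in $L^2$; compare $\|\tilde Y^\epsilon|\tilde Y^\epsilon|^2\|_{L^2}\sim\epsilon^{\frac52}$. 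Without further input you are a full power of $\epsilon$ short of \eqref{TildegkEst}.

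The missing ingredient is the null structure itself. These terms vanish on three linear waves of \eqref{ZeroLinear} travelling in the same direction, and $(|D|^{-\frac32}\tilde Y^\epsilon,-\tilde Y^\epsilon)$ is exactly a modulated wave of that type. So after your first reduction (quintic error), the frozen null symbol at $(-1,-1,1)$ is zero. Your second reduction then gives
\[
\|\tilde G^{(3)}_{null}\|_{L^2}+\|\tilde K^{(3)}_{null}\|_{L^2}\lesssim\epsilon^{\frac32}\cdot\epsilon^2=\epsilon^{\frac72}.
\]
This is the paper's step of estimating the null parts directly and absorbing them into $(\tilde g^\epsilon,\tilde k^\epsilon)$. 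With it added, your argument closes.

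\textbf{A smaller slip.} We have $V(t,x)=\epsilon\tilde U(\epsilon^2t,\epsilon x+\frac52\epsilon t)$, which includes the group-velocity transport. Hence $\partial_tV$ contains $\frac52\epsilon^2(\partial_x\tilde U)(\cdot)$, and $\|\partial_tV\|_{L^2}\sim\epsilon^{\frac32}$, not $\epsilon^{\frac52}$. Likewise $(|D|^{\frac52}-1)(\tilde Y^\epsilon|\tilde Y^\epsilon|^2)$ is $O_{L^2}(\epsilon^{2}\cdot\epsilon^{\frac32})=O_{L^2}(\epsilon^{\frac72})$, not $O(\epsilon^{\frac92})$. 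Both errors in your $Y^+$ residual are therefore only $O(\epsilon^{\frac72})$. That is still exactly what is needed, so this does not affect the conclusion.
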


\begin{proof}
Recall that $\tilde{Y}^\epsilon$ satisfies the bounds \eqref{TildeYEpsilonBound}. We then have
\begin{align*}
\|\tilde{\tilde{W}}^\epsilon - \tilde{Y}^\epsilon\|_{L^2} + \|\tilde{\tilde{Q}}^\epsilon + \tilde{Y}^\epsilon\|_{L^2}\lesssim& \|(|D|^{-\f32}-1)\tilde{Y}^\epsilon\|_{L^2}+\|\tilde{Y}^\epsilon\|_{L^2}\|\tilde{Y}^\epsilon\|^2_{L^\infty}\\
\lesssim& \|(D+1)\tilde{Y}^\epsilon\|_{L^2}+\|\tilde{Y}^\epsilon\|_{L^2}\|\tilde{Y}^\epsilon\|^2_{L^\infty}\lesssim\e^{\f32}.
\end{align*}
It follows from the bounds \eqref{TildeYEpsilonBound} that we can freely replace $\tilde{\tilde{W}}^\e$ and $\tilde{\tilde{Q}}^\e$ by $\tilde{Y}^\e$ in cubic source terms in \eqref{NF5} up to negligible errors. From this, we can estimate directly the null parts  
\[
\| \tilde{G}_{\text{null}}^{(3)} \|_{L^2} + \| \tilde{K}_{\text{null}}^{(3)} \|_{L^2} \leq \epsilon^{\frac{7}{2}}.
\]

Next, we consider the time derivatives of $\tilde{\tilde{W}}^\e$ and $\tilde{\tilde{Q}}^\e$. For $\tilde{\tilde{Q}}^\e$ we have  
\[
\partial_t \tilde{\tilde{Q}}^\e = -\partial_t \tilde{Y}^\e - \frac{31}{1024} \left(2\partial_t \tilde{Y}^\e |\tilde{Y}^\e|^2 + \partial_t \bar{\tilde{Y}}^\e (\tilde{Y}^\e)^2\right).
\]

For $\tilde{Y}^\e$, it follows from \eqref{TildeYEpsilon} that
\[
\partial_t \tilde{Y}^\e = i\omega_0 (D) \tilde{Y}^\e -i\frac{57}{512} \tilde{Y}^\e|\tilde{Y}^\e|^2 - ig^\e.
\]

Using again \eqref{TildeYEpsilonBound}, we can discard the quintic terms arising from $\partial_t \tilde{Y}^\e$. Since
\begin{align*}
\|(D+1)\tilde{Y}^{\e}\|_{L^2}\lesssim \epsilon^\f32,
\end{align*}
we can replace $i\omega_0 (D) \tilde{Y}^\e$ by $-i \tilde{Y}^\e$ in the cubic terms, to obtain 
\[
\partial_t \tilde{\tilde{Q}}^\e = -i\omega_0 (D) \tilde{Y}^\e + i\frac{145}{1024} \tilde{Y}^\e |\tilde{Y}^\e|^2 + O_{L^2} (\epsilon^{\frac{7}{2}}).
\]

Similarly, we have
\begin{align*}
&\partial_{\al} \tilde{\tilde{Q}}^\e = i |D|^{\frac{1}{2}}\tilde{Y}^\e +i\frac{31}{1024} \tilde{Y}^\e |\tilde{Y}^\e|^2 + O_{L^2} (\epsilon^{\frac{7}{2}}), \\
&\partial_t \tilde{\tilde{W}}^\e = i |D|^{-\frac{3}{2}} \omega_0 (D) \tilde{Y}^\e-i\f{83}{1024}\tilde{Y}^\e |\tilde{Y}^\e|^2 + O_{L^2} (\epsilon^{\frac{7}{2}}), \\
& \partial_\al^4 \tilde{\tilde{W}}^\e = |D|^{\f52} \tilde{Y}^\e - \frac{31}{1024} \tilde{Y}^\e |\tilde{Y}^\e|^2 + O_{L^2} (\epsilon^{\frac{7}{2}}).
\end{align*}

Thus, we compute using the facts that $\omega_0(D)$ agrees to $-|D|^{\f52}$ in cubic order at $\xi = -1$ and $\xi$ is localized near $-1$, we get
\begin{align*}
e_1 :&= \partial_t \tilde{\tilde{W}}^\e+ \partial_\al\tilde{\tilde{Q}}^\e - \tilde{G}_{r}^{(3)} (\tilde{Y}^\e, \tilde{Y}^\e) = i|D|^{-\f32}(\omega_0 (D) +|D|^{\frac{5}{2}}) \tilde{Y}^\e + O_{L^2} (\epsilon^{\frac{7}{2}}) = O_{L^2} (\epsilon^{\frac{7}{2}}), \\
e_2 :&= \partial_t \tilde{\tilde{Q}}^\e-i \tilde{\tilde{W}}^\e_{\al \al \al \al} - \tilde{K}_{r}^{(3)} (\tilde{Y}^\e, \tilde{Y}^\e) = -i (\omega_0 (D) +|D|^{\frac{5}{2}}) \tilde{Y}^\e + O_{L^2} (\epsilon^{\frac{7}{2}}) = O_{L^2} (\epsilon^{\frac{7}{2}}).
\end{align*}
Putting $e_1$ and $\tilde{G}^{(3)}_{null}$ into $\tilde{g}^\epsilon$ and $e_2$ and $\tilde{K}^{(3)}_{null}$ into $\tilde{k}^\epsilon$, we obtain the desired estimate.
\end{proof}

Next, we consider the approximate solutions $(W^\epsilon, Q^\epsilon)$.
For source terms $(G, K)$ in \eqref{NF1}, we show that $(W^\epsilon, Q^\epsilon)$ solve the following equations:
\begin{equation}\label{NF6}
\begin{cases}		W^\epsilon_t+Q^\epsilon_{\al}=G^{(2)}(W^\epsilon, Q^\epsilon)+G^{(3)}(W^\epsilon, Q^\epsilon)+ g^\epsilon\,,\\
Q^\epsilon_t-iW^\epsilon_{\al\al\al\al}=K^{(2)}(W^\epsilon, Q^\epsilon)+K^{(3)}(W^\epsilon, Q^\epsilon) +k^\epsilon\,.
\end{cases}
\end{equation}

\begin{lemma}
For the approximate functions $(W^\epsilon, Q^\epsilon)$ that are defined in \eqref{WQeDef}, they are close to $(\tilde{Y}^\epsilon, - \tilde{Y}^\epsilon)$ such that
\begin{equation} \label{WQYEpsilondiff}
 \|W^\epsilon - \tilde{Y}^\epsilon\|_{L^2} + \|Q^\epsilon + \tilde{Y}^\epsilon\|_{L^2} \lesssim \epsilon^\f32.
\end{equation}
Moreover, they are good approximate solutions in the sense that error terms $(g^\epsilon,k^\epsilon)$ in \eqref{NF6} are small such that
\begin{equation*}
 \|g^\epsilon\|_{L^2} + \|k^\epsilon\|_{L^2} \lesssim \epsilon^{\f72}. 
\end{equation*}
\end{lemma}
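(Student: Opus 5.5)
We would prove the lemma by treating $(W^\e,Q^\e)$ as the image of $(\tilde{\tilde{W}}^\e,\tilde{\tilde{Q}}^\e)$ under the inverse of the composite normal form transformation $(W,Q)\mapsto(\tilde{\tilde W},\tilde{\tilde Q})$, truncated at cubic order. We then transfer the previous lemma through it. Throughout we use that $\tilde{\tilde{W}}^\e,\tilde{\tilde{Q}}^\e$ are frequency localized in $(-2,0)$, hence also near $-1$ up to errors controlled by $\|(D+1)\tilde Y^\e\|_{L^2}\lesssim \e^{3/2}$. They also satisfy the bounds \eqref{TildeYEpsilonBound}: $L^2$ norm $\lesssim\e^{1/2}$ and $L^\infty$ norm $\lesssim \e$.

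\textbf{Step 1: closeness \eqref{WQYEpsilondiff}.} The symbols $A^h,\dots,D^a$, restricted to frequencies $\xi,\eta$ near $-1$, are smooth and bounded. The denominators do not vanish there; e.g.\ at $\xi=\eta=-1$ one has $25\cdot 49-4\neq0$, and the antiholomorphic ones are nonzero as well. Output frequencies stay in a compact set, so every bilinear form applied to these localized functions obeys $\|B(f,g)\|_{L^2}\lesssim\|f\|_{L^2}\|g\|_{L^\infty}$. The quadratic corrections in \eqref{WQeDef} are therefore $O_{L^2}(\e^{3/2})$. The terms quadratic in corrections and the cubic corrections $\tilde{\tilde W}^\e_{[3]}$, $\tilde{\tilde Q}^\e_{[3]}$ are $O_{L^2}(\e^{5/2})$. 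Combining with \eqref{DoulbleTildeWQYdiff} gives \eqref{WQYEpsilondiff}.

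\textbf{Step 2: the equation \eqref{NF6}.} Write $(W^\e,Q^\e)=\Phi^{-1}_{\le 3}(\tilde{\tilde W}^\e,\tilde{\tilde Q}^\e)$, where $\Phi^{-1}_{\le3}$ is the algebraic inverse of $\Phi=\mathrm{Id}+(\cdot)_{[2]}+(\cdot)_{[3]}$ up to cubic order. By construction,
\[
\Phi(W^\e,Q^\e)=(\tilde{\tilde W}^\e,\tilde{\tilde Q}^\e)+R^\e,
\]
with $R^\e$ quartic and higher in $(\tilde{\tilde W}^\e,\tilde{\tilde Q}^\e)$, so $\|R^\e\|_{L^2}\lesssim\e^{7/2}$. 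The normal form identities from \cite{MR4846707} are algebraic. For \emph{any} $(W,Q)$, the pair $\Phi(W,Q)$ satisfies \eqref{NF4}, with the linear operator applied to $\Phi(W,Q)$ replaced by its action on the residual $(W_t+Q_\al-G,\,Q_t-iW_{\al\al\al\al}-K)$. The linear operator is corrected at second and third order by the differentials of $(\cdot)_{[2]}$ and $(\cdot)_{[3]}$, and the quartic remainders are of the form $\tilde{\tilde G}^{(4+)},\tilde{\tilde K}^{(4+)}$. Apply this with $(W,Q)=(W^\e,Q^\e)$, where the residual is $(g^\e,k^\e)$ truncated at cubic order, and compare with \eqref{NF5}. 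This yields
\[
(g^\e,k^\e)+D(\cdot)_{[2]}\text{-terms}(g^\e,k^\e)=(\tilde g^\e,\tilde k^\e)+(\p_t+L)R^\e+\text{quartic terms}.
\]
Since $\|(g^\e,k^\e)\|$ multiplied by $O(\e)$ can be absorbed, we get $\|g^\e\|+\|k^\e\|\lesssim\|\tilde g^\e\|+\|\tilde k^\e\|+O(\e^{7/2})$. This is $\lesssim\e^{7/2}$ by \eqref{TildegkEst}.

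\textbf{Main obstacle.} The delicate point is the term $(\p_t+L)R^\e$ together with the quartic remainders. These involve time derivatives and up to four spatial derivatives of the quartic expressions. Spatial derivatives are harmless by the frequency localization. For $\p_t$, we substitute $\p_t\tilde Y^\e=i\omega_0(D)\tilde Y^\e+O(\e^3)$, which keeps everything frequency localized with $L^\infty$ size $\e$. Each quartic term is then bounded by $\e^{1/2}\cdot\e^3$. A second point needing care is the nondegeneracy of the denominators of $B^a,\dots,D^a$ near $\xi=\eta=-1$, where $\xi+\eta$ small does not occur but $\xi-\eta$-type resonances must be checked. We would verify this directly at $(-1,-1)$ and extend it by continuity to the small neighborhood of $-1$ guaranteed by the smallness of $c$.
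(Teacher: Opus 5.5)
Your proposal is correct and is essentially the paper's proof. Step~1 is exactly the $L^2\times L^\infty$ bound on the corrections in \eqref{WQeDef} combined with \eqref{DoulbleTildeWQYdiff}. Step~2 fleshes out the paper's observation that, since $(W^\e,Q^\e)$ is the cubic-order inverse normal form of $(\tilde{\tilde W}^\e,\tilde{\tilde Q}^\e)$, the errors are quartic or involve $(\tilde g^\e,\tilde k^\e)$. You organize this through the forward map and an absorption step. Unlike reading the errors off directly from \eqref{WQeDef}, the absorption also uses that $(g^\e,k^\e)$ has bounded frequency support, which holds here because it is polynomial in frequency-localized functions.

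One harmless arithmetic slip: at $\xi=\eta=-1$ the holomorphic denominator is $25\cdot 36-4=896$, not $25\cdot 49-4$.
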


\begin{proof}
Using estimates \eqref{TildeYEpsilonBound} and \eqref{DoulbleTildeWQYdiff}, we get 
\begin{equation} \label{DoubleTildeWQEst}
\|\tilde{\tilde{W}}^\epsilon \|_{L^2} + \|\tilde{\tilde{Q}}^\epsilon \|_{L^2}\lesssim \epsilon^\f12, \quad \|\tilde{\tilde{W}}^\epsilon \|_{L^\infty} + \|\tilde{\tilde{Q}}^\epsilon \|_{L^\infty} \lesssim \epsilon.
\end{equation}
The definition of $(W^\e, Q^\e)$ in \eqref{WQeDef} shows
\begin{equation*}
 \|W^\epsilon - \tilde{\tilde{W}}^\epsilon\|_{L^2} + \|Q^\epsilon - \tilde{\tilde{Q}}^\epsilon\|_{L^2} \lesssim \epsilon^\f32.  
\end{equation*}
This gives the estimate \eqref{WQYEpsilondiff} by combining with the estimate \eqref{DoulbleTildeWQYdiff}.
According to the definition of $(W^\epsilon, Q^\epsilon)$ in \eqref{WQeDef}, they are formally the inverse normal form transformations up of $(\tilde{\tilde{W}}, \tilde{\tilde{Q}})$ to cubic terms. 
Hence, error terms $(g^\epsilon, k^\epsilon)$ contain either quadratic and higher order terms in $(\tilde{\tilde{W}}^\e, \tilde{\tilde{Q}}^\e)$ or bilinear and higher order terms in $(\tilde{\tilde{W}}^\e, \tilde{\tilde{Q}}^\e)$ and $(\tilde{g}^\e, \tilde{k}^\e)$.
In either case, by using \eqref{DoubleTildeWQEst} and \eqref{TildegkEst}, one obtains the desired estimate.
\end{proof}

Finally in this subsection, we take into account the quartic and higher terms in the hydroelastic waves.
These terms satisfy
\begin{equation*}
 \|(G^{(\geq 4)}(W^\e, Q^\e), K^{(\geq 4)}(W^\e, Q^\e))\|_{\H} \lesssim \epsilon^{\f72}.
\end{equation*}
We then conclude the following NLS approximation result for the approximate solutions $(W^\e, Q^\e)$ of the hydroelastic waves.
\begin{proposition} \label{t:ApproximateWQ}
Let $\epsilon>0$, and $\tilde{Y}^\e$ be the approximate solution to the equation \eqref{TildeYEpsilon}.
Then there exists a frequency localized approximate solution $(W^\e, Q^\e)$ for the hydroelastic waves \eqref{HF14} with the following properties:
\begin{enumerate}
\item $(W^\e, Q^\e)$ are close to $(\tilde{Y}^\e, -\tilde{Y}^\e)$ in the sense of \eqref{WQYEpsilondiff}.
\item $(W^\e, Q^\e)$ solve the system
\begin{equation*}
\begin{cases}
&W^\e_t+F^\e(1+W^\e_{\al})= g^\e,\\
&Q^\e_t+F^\e Q^\e_{\al}+\nP\left[\f{|Q^\e_{\al}|^2}{J_\e}\right]
-i \nP\left\{\f{1}{J_\e^{\f12}}\f{d}{d\al}\left[\f{1}{J_\e^{\f12}}\f{d}{d\al}\left(\f{W^\e_{\al\al}}{J_\e^{\f12}(1+W^\e_{\al})}
-\f{\bar{W}^\e_{\al\al}}{J_\e^{\f12}(1+\bar{W}^\e_{\al})}\right)\right]\right\}\\
&-\f{i}{2} \nP\left\{\left[\f{W^\e_{\al\al}}{J_\e^{\f12}(1+W^\e_{\al})}-\f{\bar{W}^\e_{\al\al}}{J_\e^{\f12}(1+\bar{W}^\e_{\al})}\right]^3\right\}= k^\e,
\end{cases}
\end{equation*}
where the error terms on the right-hand side satisfy
\begin{equation*}
 \|(g^\e, k^\e)\|_{\H} \lesssim \e^{\f72}.
\end{equation*}
\end{enumerate}
\end{proposition}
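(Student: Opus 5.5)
The proof assembles the two preceding lemmas and upgrades the error bounds from $L^2$ to the space $\H = H^{\f32}\times L^2$. We take $(W^\e,Q^\e)$ to be the functions defined in \eqref{WQeDef}.

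\textbf{Property (1).} This is exactly \eqref{WQYEpsilondiff}, already proved in the second lemma. Nothing more is needed there.

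\textbf{Property (2): rewriting the system.} We expand the full nonlinearity of \eqref{HF14} as in \eqref{NF1}: $G = G^{(2)}+G^{(3)}+G^{(4+)}$ and $K = K^{(2)}+K^{(3)}+K^{(4+)}$. With this expansion, the left-hand sides in the proposition, evaluated at $(W^\e,Q^\e)$, equal
\begin{equation*}
W^\e_t+Q^\e_\al-G^{(2)}-G^{(3)}-G^{(4+)}, \qquad Q^\e_t-iW^\e_{\al\al\al\al}-K^{(2)}-K^{(3)}-K^{(4+)}.
\end{equation*}
We define the errors of the proposition as the old ones minus the quartic part:
\begin{equation*}
g^\e_{\mathrm{new}} := g^\e - G^{(4+)}(W^\e,Q^\e), \qquad k^\e_{\mathrm{new}} := k^\e - K^{(4+)}(W^\e,Q^\e),
\end{equation*}
where $(g^\e,k^\e)$ are the errors from \eqref{NF6}. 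The quartic contribution is already controlled in $\H$ by $\e^{\f72}$, as stated just before the proposition. It therefore remains to bound $(g^\e,k^\e)$ from \eqref{NF6} in $\H$, not only in $L^2$.

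\textbf{Upgrading to $\H$ (the main obstacle).} The previous lemma gives only $L^2$ bounds, so the main step is controlling $\|g^\e\|_{H^{3/2}}$. Frequency localization should make this routine:
\begin{itemize}
\item $\tilde Y^\e$ has Fourier support in $|\xi+1|\le c$.
\item Every object appearing in $g^\e,k^\e$ is a finite multilinear expression (of degree at most about nine) in $\tilde Y^\e,\bar{\tilde Y}^\e$ and $g^\e$ from \eqref{TildeYEpsilon}, with smooth bounded symbols. The symbols are bounded because the denominators of $A^h,\dots,D^a$ do not vanish on the relevant compact frequency regions near $(\xi,\eta)=(-1,\pm1)$.
\item Hence every term of $(g^\e,k^\e)$ is supported in $|\xi|\lesssim 1$.
\end{itemize}
On that support Bernstein's inequality gives $\|f\|_{H^{3/2}}\lesssim\|f\|_{L^2}$, so the $L^2$ estimate $\lesssim\e^{\f72}$ transfers directly to $\H$.

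\textbf{Remaining terms not covered by frequency localization.} Two pieces need separate checks.
\begin{itemize}
\item The NLS truncation error $g^\e$ from \eqref{TildeYEpsilon} is likewise frequency localized, and with $s=3$ it satisfies $\|g^\e\|_{L^2}\lesssim\e^{s+\f72}$. This is far better than needed.
\item The zero mode and the projection $\nP$ are harmless on compact frequency sets.
\end{itemize}

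Combining these bounds gives $\|(g^\e,k^\e)\|_{\H}\lesssim\e^{\f72}$, which completes the proof.
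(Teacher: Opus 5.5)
Your proposal follows the paper's route: the proposition comes from combining the second lemma's $L^2$ bounds for the errors in \eqref{NF6} with the stated $\H$-bound on $(G^{(4+)},K^{(4+)})(W^\e,Q^\e)$, and your Bernstein step makes explicit the $L^2\to H^{\f32}$ upgrade that the paper leaves implicit when it calls $(W^\e,Q^\e)$ frequency localized. One side remark is inaccurate but not load-bearing: the iterated terms in \eqref{WQeDef}, e.g.\ $B^a(\tilde{\tilde{W}}^\e_{[2]},\overline{\tilde{\tilde{W}}^\e})$, evaluate $A^a,\dots,D^a$ with first argument near frequency $0$ (coming from the $B^a,C^a$ and $A^a,D^a$ parts of $\tilde{\tilde{W}}^\e_{[2]}$, $\tilde{\tilde{Q}}^\e_{[2]}$), where the factor $\xi$ in their denominators vanishes, so the symbols are not controlled merely by looking near $(\xi,\eta)=(-1,\pm1)$; Bernstein, however, only needs bounded Fourier support plus the lemma's $L^2$ bound, so your conclusion is unaffected.
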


\subsection{Replacing approximate solutions by exact solutions} \label{s:ReplaceExact}
Having constructed the approximate hydroelastic wave solutions $(W^\e, Q^\e)$, we now compare them with the exact solution, so that the conclusion of Proposition \ref{t:ApproximateWQ} also holds for $(W, Q)$.
To estimate the difference between $(W^\e, Q^\e)$ and $(W, Q)$, we need the modified energy estimate for the linearized hydroelastic wave system.

We consider a two-parameter family of functions $(W(t,s), Q(t,s))$ for $(t,s) \in [0, T\e^{-2}]\times [0, T\e^{-2}]$ that solve the hydroelastic waves \eqref{HF14} with the initial condition at $t = s$:
\begin{equation*}
    W(s,s) = W^\e(s), \quad Q(s,s) = Q^\e(s).
\end{equation*}
Here $t$ is the time variable, and $s$ can be viewed as a parameter.

We first claim that for sufficiently small $\epsilon$, the solutions exist up to time $T\epsilon^{-2}$ and, moreover, that the control parameters $A_0(t,s)$, $A_{\f52}(t,s)$ $\mathcal{A}_{2}(t,s)$ defined in \eqref{MRA} and \eqref{D:A74} satisfy the uniform bounds
\[A_0(t,s), A_{\f52}(t,s), \mathcal{A}_{2}(t, s)\leq M\epsilon\] for some universal constant $M$.

To prove this, we employ a continuity argument. 
Since $A_0, A_{\f52}$ and $\mathcal{A}_{2}$ are continuous functions of $t$ and $s$, we make the bootstrap assumptions
\[A_0(t,s), A_{\f52}(t,s),\mathcal{A}_{2} \leq 2M\epsilon, \quad s_0 \leq s \leq t \leq T\epsilon^{-2},\]
and then show that for a sufficiently large $M$, the constant $2M$ can be improved to $M$.
This improvement implies that the solutions can be extended up to $T\epsilon^{-2}$, and hence the desired bounds hold on the entire interval.
Using the modified energy estimate Theorem \ref{t:QMEHP}, we get
\begin{equation} \label{WREnergyBound}
 \|(\mathbf{W}, R)\|_{\H^N} \lesssim e^{4CM^2T}\epsilon^\f12.
\end{equation}

Note that $(\partial_s W(t,s), \partial_s Q(t,s))$ solve the linearized hydroelastic wave system \eqref{wqLinearized}. 
When $t = s$, we have the initial condition
\[
(w,q)(s,s) = (\tilde{g}^\e(s), \tilde{k}^\e(s)).
\]

Switching to the good variable $r = q - Rw$ and applying Theorem \ref{t:Hswellposedflow} yield the uniform bound of $(w,r)$,
\[
\|(w,r)(t,s)\|_{\H} \lesssim e^{4CM^2T} \epsilon^{\frac{7}{2}}.
\]

Integrating in $s$, the bound on $w$ gives
\begin{equation}\label{Ws-t}
\|W(t,s)-W(t,t)\|_{H^{\f32}} \lesssim T e^{4CM^2T} \epsilon^{\frac{3}{2}}.  
\end{equation}
Since $Q_s=r+Rw$, by the bounds of $(w, r)$, we obtain in a similar way that
\begin{equation}\label{Qs-t}
\|Q(t,s)-Q(t,t)\|_{L^2} \lesssim T e^{4CM^2T} \epsilon^{\frac{3}{2}}.  
\end{equation}

Next, we consider $R$, for which we have
\begin{equation*}
R_s=\f{r_{\al}+R_{\al}w}{1+W_{\al}}.
\end{equation*}
This implies
\begin{align*}
\|R_s\|_{H^{-1}}\lesssim e^{8CM^2T} \epsilon^{\frac{7}{2}}.
\end{align*}
Integrating in $s$ then yields the estimate
\begin{equation}\label{Rs-t}
\|R(t,s)-R(t,t)\|_{H^{-1}}\lesssim Te^{8CM^2T} \epsilon^{\frac{3}{2}}.
\end{equation}
Combining the bounds \eqref{WREnergyBound}, \eqref{Ws-t}, and \eqref{Rs-t} and interpolating, we obtain
\begin{equation*}
\|(\mathbf{W}(t,s)-\mathbf{W}(t,t),R(t,s)-R(t,t))\|_{\H^N} \lesssim Te^{8CM^2T} \epsilon^{\frac{3}{2}-\delta}.
\end{equation*}
Finally, using Sobolev embeddings, we obtain
\[\mathcal{A}_{2}(t,s)
\lesssim e^{8CM^2T} \epsilon^{\frac{3}{2}-\delta} + \mathcal{A}_{2}(t,t)\lesssim Te^{8CM^2T} \epsilon^{\frac{3}{2}-\delta} + \epsilon.
\]
All implicit constants here are independent of $M$. 
Similar estimates also hold for $A_0(t,s)$ and $A_{\f52}(t,s)$.
This completes the bootstrap argument, and finishes the proof of Theorem \ref{t:NLSApproximation}.

\section{The proof of nonlinear modulational instability of the Stokes waves} \label{s:Modulation}

In this section, we use the nonlinear modulational instability of the Stokes waves of the cubic NLS equation to deduce the nonlinear modulational instability of the Stokes waves of the hydroelastic waves.

Consider a 1-D cubic focusing NLS equation 
\begin{equation} \label{StandardCubicNLS}
    (i\p_t + \p_\al^2)u + |u|^2u = 0.
\end{equation}
If we set 
\begin{equation} \label{UuRelation}
U(t,\al)=i\sqrt{\f{512}{57}}u\left(t,\sqrt{\f{8}{15}}\al\right),   
\end{equation}
then $U$ solves the equation \eqref{LambdaNLS}.
We further define $Y^\e$ as in \eqref{YURelation}.
According to the result in Theorem \ref{t:NLSApproximation}, there exists a small constant $\e$ such that for each $0<\epsilon< \epsilon_0$, there exists a pair of solution $(W, Q)$ to the system \eqref{HF14} for $t$ in the time interval $[0, T\epsilon^{-2}]$ with the estimate \eqref{WQYEpsilonDiffBd}.

Following the discussion from Appendix \ref{s:NLSInstability}, we take the special solution $u_{ST} = e^{it}$ to the equation \eqref{StandardCubicNLS}.
Then according to our derivation in Section \ref{s:Approximation},
\begin{equation*}
  Y^\e_{ST}(t,x) = i\e \sqrt{512/57} e^{-i(t+x)}e^{i\e^2t}
\end{equation*}
is a periodic traveling wave solution to \eqref{CubicEpsilonDispersive}.
On one hand, it follows from Theorem \ref{t:NLSApproximation} that for $t\in[0, T\epsilon^{-2}]$, there exists a pair of solutions $(W^{(1)}, Q^{(1)})$ such that
\begin{align*}
\|(W^{(1)}-i\e \sqrt{512/57} e^{-i(t+\alpha)}e^{i\e^2t},Q^{(1)}+i\e \sqrt{512/57}e^{-i(t+\alpha)}e^{i\e^2t})\|_{\H(\e^{-1}\T)}\lesssim  \e^{\f32}.
\end{align*}
On the other hand, by the asymptotic expansion of the Stokes waves in Theorem \ref{t:Existence}, with $ \varepsilon = \sqrt{\frac{512}{57}}\epsilon$, 
\begin{equation} \label{512Constant}
 \|(W_{ST},Q_{ST})(t,\alpha ) - i\e \sqrt{512/57}(e^{-i(t+\alpha)}e^{i\e^2t}, - e^{-i(t+\alpha)}e^{i\e^2t})  \|_{\H(\epsilon^{-1}\T)}\lesssim   \e^{\f32}.  
\end{equation}

Hence, we get for $t\in [0, T\epsilon^{-2}]$,
\begin{equation*}
  \|(W_{ST},Q_{ST})(t,\alpha ) - (W^{(1)}, Q^{(1)})  \|_{\H(\epsilon^{-1}\T)}\lesssim \e^{\f32}.  
\end{equation*}
In particular, when $t=0$,  
\begin{equation*}
  \|(W_{ST},Q_{ST})(0, \cdot) - (W^{(1)}_0, Q^{(1)}_0)  \|_{\H(\epsilon^{-1}\T)}\lesssim \e^{\f32}.  
\end{equation*}

We now consider a perturbation of the special solution $u_{ST}$ in the form
\begin{equation*}
    u^{(2)}(t,x) = u_{ST}(t,x) + e^{it}w(t,x).
\end{equation*}
Then the perturbation $w(t,x)$ solves the equation \eqref{wNLSEqn}.
By considering initial data of the type \eqref{InitialDataw} with
\begin{equation*}
 \delta = \frac{\epsilon}{2}, \quad |\delta_j| = \frac{\delta}{2}, \quad |\eta_j|\ll \delta,
\end{equation*}
the initial data $w_0$ satisfies the estimate
\begin{equation*}
    \| w_0\|_{H^1(\T)} \leq \f34 \e.
\end{equation*}
Moreover, according to Theorem \ref{t:NLSInstability}, for $\mu = \f12$, and $T_0 = |\log \e|$, 
\begin{equation*}
    \| w(t)\|_{H^1(\T)} \lesssim  1, \quad t\in [0, T_0]; \qquad \| w(T_0)\|_{H^1(\T)} \geq \f18.
\end{equation*}
We further define $U^{(2)}$, $U^{(2), \e}$ and $Y^{(2), \e}$ according to \eqref{UuRelation} and \eqref{YURelation}, so that
\begin{equation*}
 Y^{(2), \e}(t,\al) = Y^{\e}_{ST}(t,\al) + i\e \sqrt{512/57} e^{-i(t+x)+i\e^2t} w(\e^2 t , \sqrt{8/15}\e\al + 5/2 \e^2 t).
\end{equation*}
We have estimates at time $t=0$ and $t = T_\e : = \e^{-2}|\log \e|$, 
\begin{equation*}
 \|Y^{(2), \e}(0,\cdot) -Y^{\e}_{ST}(0,\cdot) \|_{H^1(\e^{-1}\T)} \lesssim \e^{\f32}, \quad \|Y^{(2), \e}(T_\e,\cdot) -Y^{\e}_{ST}(T_\e,\cdot) \|_{H^1(\e^{-1}\T)} \gtrsim \e^{\f12}.
\end{equation*}
Using again Theorem \ref{t:NLSApproximation}, for $t\in[0, \epsilon^{-2}|\log \e|]$, there exists a pair of solution $(W^{(2)}, Q^{(2)})$ such that
\begin{align*}
\|(W^{(2)}, Q^{(2)}) - (Y^{(2), \e}, -Y^{(2), \e})\|_{\H(\e^{-1}\T)}\lesssim  \e^{\f32}.
\end{align*}
The solution $(W^{(2)}, Q^{(2)})$ are close to the Stokes wave solutions initially in the sense that
\begin{align*}
&\|(W^{(2)}_0, Q^{(2)}_0) -  (W_{ST},Q_{ST})(0,\cdot)\|_{\H(\e^{-1}\T)} \lesssim \|(W^{(2)}_0, Q^{(2)}_0) -  (Y^{(2), \e}, -Y^{(2), \e})(0,\cdot)\|_{\H(\e^{-1}\T)} \\
+& \|Y^{(2), \e}(0,\cdot) -Y^{\e}_{ST}(0,\cdot)\|_{H^1(\e^{-1}\T)} + \| (Y^{\e}_{ST}, -Y^{\e}_{ST})(0,\cdot)-(W^{(1)}_0, Q^{(1)}_0)\|_{\H(\e^{-1}\T)} \\
+&   \| (W^{(1)}_0, Q^{(1)}_0)-(W_{ST},Q_{ST})(0, \cdot)  \|_{\H(\epsilon^{-1}\T)}\lesssim \e^{\f32}.
\end{align*}
However, at time $t = T_\e = \e^{-2}|\log \e| $, $(W^{(2)}, Q^{(2)})$ are no longer close to the Stokes wave solutions, as
\begin{align*}
& \|(W^{(2)}, Q^{(2)})(T_{\e},\cdot) -  (W_{ST},Q_{ST})(T_{\e},\cdot )\|_{\H(\e^{-1}\T)} \\
\gtrsim& \|Y^{(2), \e}(T_\e,\cdot) -Y^{\e}_{ST}(T_\e,\cdot)\|_{H^1(\e^{-1}\T)}- \|(W^{(2)}, Q^{(2)})(T_\e, \cdot) -  (Y^{(2), \e}, -Y^{(2), \e})(T_\e,\cdot)\|_{\H(\e^{-1}\T)}  \\
&- \| (Y^{\e}_{ST}, -Y^{\e}_{ST})(T_\e,\cdot)-(W^{(1)}, Q^{(1)})(T_\e, \cdot)\|_{\H(\e^{-1}\T)}\\
&-   \| (W^{(1)}, Q^{(1)})(T_\e, \cdot)-(W_{ST},Q_{ST})(T_\e, \cdot)  \|_{\H(\epsilon^{-1}\T)}\gtrsim \e^{\f12}.
\end{align*}

Therefore, we show that the Stokes wave solutions $(W_{ST},Q_{ST})$ are nonlinearly modulationally unstable.

\appendix
\section{Local bifurcation theorem} \label{s:bifurcation}
This Section recalls the local bifurcation theorem by Crandall and Rabinowitz \cite{MR288640}.
\begin{theorem}[Local bifurcation, \cite{MR288640}]
Suppose that $X$ and $Y$ are Banach spaces, that $F : \mathbb{R} \times X \to Y$ is of class $C^k$, $k \geq 2$, and that $F(\lambda, 0) = 0 \in Y$ for all $\lambda \in \mathbb{R}$. Suppose also that
\begin{itemize}
\item $L = \partial_x F[(\lambda_0, 0)]$ is a Fredholm operator of index zero.

\item $\ker(L)$ is one-dimensional.
 $\ker(L) = \{\xi \in X : \xi = s\xi_0 \text{ for some } s \in \mathbb{R} \}$, $\xi_0 \in X \setminus \{0\}$.

\item The transversality condition holds:
\begin{equation*}
\partial_{\lambda, x}^2 F[(\lambda_0, 0)](1, \xi_0) \notin \text{range}(L). 
\end{equation*}
\end{itemize}
Then $(\lambda_0, 0)$ is a bifurcation point. More precisely, there exists $\epsilon > 0$ and a branch of solutions
\[
    \{(\lambda, x) = (\Lambda(s), s\chi(s)) : s \in \mathbb{R}, |s| < \epsilon \} \subset \mathbb{R} \times X,
\]
such that 
\begin{itemize}
\item $\Lambda(0) = \lambda_0$, $\chi(0) = \xi_0$.

\item $F(\Lambda(s), s\chi(s)) = 0$ for all $s$ with $|s| < \epsilon$.

\item $\Lambda$ and $s \mapsto s\chi(s)$ are of class $C^{k-1}$, and $\chi$ is of class $C^{k-2}$ on $(-\epsilon, \epsilon)$.

\item There exists an open set $U_0 \subset \mathbb{R} \times X$ such that $(\lambda_0, 0) \in U_0$ and
\begin{align*}
\{(\lambda, x) \in U_0 : F(\lambda, x) &= 0, x \neq 0\} = \{(\Lambda(s), s\chi(s)) : 0 < |s| < \epsilon\}.
    \end{align*}
\item If $F$ is analytic, $\chi$ and $\Lambda$ are analytic functions on $(-\epsilon, \epsilon)$.
\end{itemize}
\end{theorem}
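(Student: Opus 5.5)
The statement is the Crandall--Rabinowitz local bifurcation theorem, and the natural route is a Lyapunov--Schmidt reduction followed by the implicit function theorem. Since $L=\partial_x F[(\lambda_0,0)]$ is Fredholm of index zero with $\ker L=\mathrm{span}\{\xi_0\}$, the range of $L$ is closed of codimension one. I would fix a closed complement $Z$ of $\mathrm{span}\{\xi_0\}$ in $X$, so that $X=\mathrm{span}\{\xi_0\}\oplus Z$. I would also fix a continuous linear functional $\ell\in Y^*$ with $\ker \ell=\mathrm{range}(L)$ and $\ell\big(\partial^2_{\lambda,x}F[(\lambda_0,0)](1,\xi_0)\big)=1$; this normalization is possible by the transversality condition. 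Every $x$ near $0$ is then written as $x=s(\xi_0+z)$ with $s\in\mathbb{R}$ and $z\in Z$.

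Because $F(\lambda,0)=0$, the divided difference
\[
\mathcal{G}(s,\lambda,z):=\int_0^1 \partial_x F[(\lambda,\tau s(\xi_0+z))](\xi_0+z)\,d\tau
\]
satisfies $F(\lambda,s(\xi_0+z))=s\,\mathcal{G}(s,\lambda,z)$. The map $\mathcal{G}$ is of class $C^{k-1}$, and at $(0,\lambda_0,0)$ it vanishes, since $\mathcal{G}(0,\lambda_0,0)=L\xi_0=0$. For $s\neq 0$, nontrivial zeros of $F$ correspond exactly to zeros of $\mathcal{G}$.

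The key step is to show that the derivative $D_{(\lambda,z)}\mathcal{G}(0,\lambda_0,0):\mathbb{R}\times Z\to Y$, given by
\[
(\mu,\zeta)\mapsto \mu\,\partial^2_{\lambda,x}F[(\lambda_0,0)]\xi_0+L\zeta,
\]
is an isomorphism.
\begin{itemize}
\item \emph{Injectivity.} Applying $\ell$ kills $L\zeta$ and gives $\mu=0$. Then $L\zeta=0$ with $\zeta\in Z$, and since $Z\cap\ker L=\{0\}$, we get $\zeta=0$.
\item \emph{Surjectivity.} Any $y\in Y$ decomposes as $y=\ell(y)\,\partial^2_{\lambda,x}F[(\lambda_0,0)]\xi_0+\big(y-\ell(y)\,\partial^2_{\lambda,x}F[(\lambda_0,0)]\xi_0\big)$. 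The second summand lies in $\ker\ell=\mathrm{range}(L)=L(Z)$.
\end{itemize}
Boundedness of the inverse follows from the open mapping theorem. The implicit function theorem then yields $C^{k-1}$ functions $s\mapsto(\Lambda(s),z(s))$ with $\mathcal{G}(s,\Lambda(s),z(s))=0$ and $(\Lambda(0),z(0))=(\lambda_0,0)$. Setting $\chi(s)=\xi_0+z(s)$ gives $F(\Lambda(s),s\chi(s))=0$.

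For the local uniqueness clause, the implicit function theorem gives a neighbourhood $V$ of $(0,\lambda_0,0)$ in which the zeros of $\mathcal{G}$ are exactly the curve $(s,\Lambda(s),z(s))$. Any nontrivial zero $(\lambda,x)$ of $F$ close to $(\lambda_0,0)$ can be written $x=s(\xi_0+z)$ with $s\ne0$, and continuity of the projections keeps $(s,\lambda,z)$ in $V$; this defines $U_0$. If $F$ is analytic, so is $\mathcal{G}$, and the analytic implicit function theorem gives analyticity of $\Lambda$ and $\chi$.

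The main obstacle is the regularity bookkeeping. Because $\mathcal{G}$ involves one $x$-derivative of $F$, it is only $C^{k-1}$, so $\Lambda$ and $\chi$ are only $C^{k-1}$ in $s$. The statement, however, claims $\Lambda$ and $s\mapsto s\chi(s)$ are $C^{k-1}$ but $\chi$ is only $C^{k-2}$. To match it, I would follow Crandall--Rabinowitz in applying the implicit function theorem to the $C^{k-1}$ map $\mathcal{G}$. I would then note that the stated claims are weaker than, or equal to, what is obtained: $s\chi(s)$ is a product of $C^{k-1}$ functions, and $\chi\in C^{k-1}\subset C^{k-2}$.
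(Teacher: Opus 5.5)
Your reduction is the one Crandall and Rabinowitz use; the paper gives no proof and only cites \cite{MR288640}. The existence half of your argument is correct: the choice of $Z$ and $\ell$, the formula for $D_{(\lambda,z)}\mathcal{G}(0,\lambda_0,0)$ and its invertibility, the $C^{k-1}$ regularity (which even gives $\chi\in C^{k-1}$), and the analytic case all go through.

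\textbf{The gap is in the local uniqueness clause.} Write a nontrivial zero as $x=s\xi_0+w$ with $w\in Z$. Continuity of the projections only tells you that $|s|$ and $\|w\|$ are small. It does not make $z=w/s$ small. It also says nothing about zeros with $s=0$, $w\neq 0$, which cannot be written as $s(\xi_0+z)$ at all. For example, points $x=s\xi_0+|s|^{1/2}\zeta$ with $\zeta\in Z\setminus\{0\}$ are arbitrarily close to $0$, while the corresponding $z=|s|^{-1/2}\zeta$ is unbounded. So nearness to $(\lambda_0,0)$ alone does not place $(s,\lambda,z)$ in the implicit-function neighbourhood $V$. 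You must show that zeros of $F$ cannot behave like this; without that, the set identity defining $U_0$ is not established.

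\textbf{The fix is an a priori cone estimate for zeros of $F$,} which uses the Fredholm structure a second time. Let $v=\partial^2_{\lambda,x}F[(\lambda_0,0)](1,\xi_0)$, and let $P=I-\ell(\cdot)\,v$, which is the projection onto $\mathrm{range}(L)$. Since $F(\lambda,0)=0$ and $L\xi_0=0$, a zero $(\lambda,x)$ with $x=s\xi_0+w$ satisfies
\[
0=PF(\lambda,x)=Lw+P\Bigl(\int_0^1\bigl(\partial_xF[(\lambda,\tau x)]-L\bigr)x\,d\tau\Bigr).
\]
The integral has norm at most $\eta\|x\|$, where $\eta\to0$ as $(\lambda,x)\to(\lambda_0,0)$ by continuity of $\partial_xF$. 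Because $L|_Z:Z\to\mathrm{range}(L)$ is an isomorphism, setting $C=\|(L|_Z)^{-1}\|\,\|P\|$ gives $\|w\|\le C\eta(|s|\,\|\xi_0\|+\|w\|)$. Hence $\|w\|\le 2C\eta\|\xi_0\|\,|s|$ once $C\eta\le\frac12$.

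Two consequences follow. First, $x\neq 0$ forces $s\neq 0$. Second, $\|z\|=\|w\|/|s|$ is as small as you like, so $(s,\lambda,z)\in V$ for every nontrivial zero in a small ball $U_1$ around $(\lambda_0,0)$. Now take $U_0=U_1\cap\{|s|<\epsilon\}$, where $s$ is the $\xi_0$-coordinate of $x$ and $\epsilon$ is shrunk so that the curve stays in $U_1$. This gives the exact set equality claimed in the statement.

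Equivalently, you can first solve the range equation $PF(\lambda,s\xi_0+w)=0$ as $w=\psi(\lambda,s)$ by the implicit function theorem. Then $\psi(\lambda,0)=0$ and $\partial_s\psi(\lambda_0,0)=0$ show that all nearby zeros lie in a thin cone around $\mathrm{span}\{\xi_0\}$.
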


To show that an operator is a Fredholm operator with index zero, one may use Theorem 2.7.6 from \cite{MR1956130}.
\begin{lemma}[\hspace{1sp}\cite{MR1956130}]\label{t:Fredholm}
Suppose $X$ and $Y$ are Banach spaces, $K \in \mathcal{L}(X, Y)$ is compact and $T \in \mathcal{L}(X, Y)$ is a homeomorphism. 
Then $B= T + K$ is Fredholm with index zero.
\end{lemma}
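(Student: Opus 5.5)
The plan is the standard one: write $B=T(I+T^{-1}K)$ and reduce to the Riesz--Schauder theory of compact perturbations of the identity. Since $T\in\mathcal{L}(X,Y)$ is a homeomorphism, the bounded inverse theorem gives $T^{-1}\in\mathcal{L}(Y,X)$. The operator $C:=T^{-1}K\in\mathcal{L}(X,X)$ is compact, because it is a compact operator followed by a bounded one. Then
\begin{equation*}
B = T + K = T(I + C),
\end{equation*}
and $T$ is an isomorphism. Hence $\ker B=\ker(I+C)$ and $\operatorname{range}B = T(\operatorname{range}(I+C))$. In particular, $\dim\ker B=\dim\ker(I+C)$. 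Also, $\operatorname{range}B$ is closed with $\operatorname{codim}\operatorname{range} B=\operatorname{codim}\operatorname{range}(I+C)$, since an isomorphism maps closed subspaces to closed subspaces and preserves codimension. So it suffices to show that $I+C$ is Fredholm of index zero on $X$.

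For $I+C$ with $C$ compact, I would invoke the Fredholm alternative (Riesz--Schauder theory), proved in three steps.
\begin{enumerate}
\item The kernel is finite-dimensional. On $\ker(I+C)$ we have $C=-I$, so the unit ball of the kernel is compact, and Riesz's lemma forces finite dimension.
\item The range is closed. Choose a closed complement $X_1$ of the finite-dimensional kernel. On $X_1$ the operator $I+C$ is bounded below; otherwise a normalized sequence $x_n\in X_1$ with $(I+C)x_n\to0$ would, after passing to a subsequence along which $Cx_n$ converges, give $x_n\to x\in X_1\cap\ker(I+C)$ with $\|x\|=1$, a contradiction.
\item The index is zero. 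The index is locally constant on the set of Fredholm operators. Every $I+tC$, $t\in[0,1]$, is Fredholm by the first two steps together with the finite codimension of the range (this uses the adjoint $C^*$, which is compact by Schauder's theorem, and the annihilator relation $(\operatorname{range}(I+C))^\perp=\ker(I+C^*)$). Hence $\operatorname{ind}(I+C)=\operatorname{ind}(I)=0$ by homotopy invariance along $t\mapsto I+tC$.
\end{enumerate}

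The main obstacle is the index-zero claim, that is, $\dim\ker(I+C)=\operatorname{codim}\operatorname{range}(I+C)$. If the stability of the index under small perturbations is not already available, I would prove the equality directly. First, the Riesz argument shows that the chains $\ker(I+C)^n$ and $\operatorname{range}(I+C)^n$ stabilize at some finite $n$. This yields a decomposition $X=N\oplus R$ into a finite-dimensional piece $N$ and a closed piece $R$, both invariant under $I+C$. The operator $I+C$ is invertible on $R$, and on the finite-dimensional $N$ the rank--nullity theorem gives index zero. Either route is classical, and in context one may simply cite it as in \cite{MR1956130}.
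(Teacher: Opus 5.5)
Your proposal is correct. The factorization $B=T(I+T^{-1}K)$ with $T^{-1}K$ compact, followed by Riesz--Schauder theory for $I+C$, is the standard proof: finite-dimensional kernel, closed range, finite codimension via Schauder's theorem, and index zero either by homotopy invariance or by the ascent--descent decomposition. The paper gives no proof of its own and only cites Theorem 2.7.6 of \cite{MR1956130}, so your argument reconstructs the classical result behind that citation; the only superfluous step is the appeal to the bounded inverse theorem, since a homeomorphism already has a continuous inverse.
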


\section{Modified energy estimate for hydroelastic waves} \label{s:Modified}
In this section, we recall the modified energy estimates for hydroelastic waves from \cite{MR4846707}.
Then, we recall the linearized hydroelastic waves from \cite{WanY}, and obtain its modified energy estimate.

We first introduce the control norms
\begin{equation}\label{MRA}
A_0:=\|\bw\|_{L^{\infty}}+\|Y\|_{L^{\infty}}+\|D^{-\f32}R\|_{L^{\infty}\cap B^{0}_{\infty, 2}}, \quad A_{\f52}:=\||D|^{\f52}\bw\|_{L^{\infty}}+\|R_{\al}\|_{L^{\infty}}.
\end{equation}
Then, we have the following modified energy estimate.
\begin{theorem}[\hspace{1sp}\cite{MR4846707}]\label{t:QMEHP}
For any $n\geq5$, there exists an energy functional $E^{n,(3)}$ which has the following properties as long as $A_0\ll1$:\\
(i) Norm equivalence:
\[
E^{n,(3)}(\bw,R)=(1+O(A_0))E_0(\p^{n-1}\bw,\p^{n-1}R),
\]
(ii) Cubic energy estimates:
\[
\f{d}{dt}E^{n,(3)}(\bw,R)\lesssim_{A_0}A_0 A_{\f52}E^{n,(3)}(\bw,R).
\]
\end{theorem}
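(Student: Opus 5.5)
The statement is recalled from \cite{MR4846707}, so the plan is to reconstruct the quasilinear modified energy argument carried out there. The first step is to rewrite \eqref{HF14} in the differentiated good variables $(\bw,R)$, with $\bw=W_\al$ and $R=Q_\al/(1+W_\al)$, and then in the form of the linearized equation around $(W,Q)$ with leading transport coefficient $b$. Next I would apply $n-1$ derivatives and isolate the leading-order quasilinear part: the fourth-order elastic operator with variable coefficients given by powers of $J^{-1/2}$ and $(1+\bw)^{-1}$, plus the transport by $b$. Everything else is lower order in derivatives. The base energy $E_0$ is the $\H$-type energy of the linear system \eqref{ZeroLinear}: $\||D|^{\f32}w\|_{L^2}^2+\|q\|_{L^2}^2$, corrected by coefficients so that the quasilinear leading part becomes skew-adjoint.

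The energy $E^{n,(3)}$ would be built in two layers. The first is a quasilinear modification: replace $E_0(\p^{n-1}\bw,\p^{n-1}R)$ by a weighted version with weights such as $J^{-5/2}$ or $J^{-1/2}$. These weights absorb the commutators between $\p^{n-1}$ and the variable-coefficient fourth-order operator, which would otherwise lose derivatives. The second layer is a quadratic and cubic normal form correction, constructed from the bilinear symbols $A^h,B^h,C^h,A^a,B^a,C^a,D^a$ of \eqref{NFP1-2} applied to the top-order variables. Its time derivative cancels the cubic terms in $\frac{d}{dt}E$, so that what remains is quartic.

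Both layers are bounded by $A_0$ times the main energy. In the normal form layer the symbols are bounded, with at most $|\xi|^{-1}$ singularities at low frequency, and one factor is placed in $L^\infty$. Since $A_0\ll1$, this gives the norm equivalence (i).

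For the energy estimate (ii), I would differentiate $E^{n,(3)}$ using the equations and then:
\begin{itemize}
\item cancel the cubic quasilinear terms by skew-symmetry together with the weight choice;
\item cancel the cubic semilinear terms with the normal form correction;
\item bound the remaining quartic terms by placing two factors in $L^\infty$, one in $A_0$ and one in $A_{\f52}$, using Coifman--Meyer and commutator estimates.
\end{itemize}
The $B^0_{\infty,2}$ component of $A_0$ is what is needed to handle the low-frequency singular pieces of $D^{-\f32}R$.

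The main obstacle is the cubic leading-order cancellation. The normal form symbols are unbounded in the high-low interaction: $B^a$ and $C^a$, for instance, grow like the higher frequency. Applied naively at top order they would therefore lose derivatives. I would first try the quasilinear modified energy strategy of Hunter--Ifrim--Tataru, which keeps only the paradifferential low-high part of the normal form correction, with the low frequency in $L^\infty$, and treats the high-high parts as balanced and bounded. The derivative-losing components would then be replaced by the weights of the first layer, and one checks that their time derivatives match up to terms controlled by $A_0A_{\f52}E$.
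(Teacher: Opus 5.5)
Your plan is essentially the argument of \cite{MR4846707}, which this paper quotes without proof. That argument is the Hunter--Ifrim--Tataru quasilinear modified energy method: the cubic correction to $E_0(\p^{n-1}\bw,\p^{n-1}R)$ is the cubic part of the normal form energy, its derivative-losing high--low pieces become, after integration by parts, weighted quadratic forms in the top-order variables with coefficients built from $\bw$ and $R$ (your ``first layer''), and the quartic remainder of the time derivative is bounded through the full quasilinear equations by $A_0A_{\f52}E^{n,(3)}$. The one thing to correct is your norm-equivalence paragraph. The quadratic normal form symbols are not bounded (e.g.\ $B^a(\xi,\eta)\approx -\tfrac{i}{2}\xi$ for $|\eta|\ll|\xi|$), so the bound by $A_0E_0$ comes from that integration by parts inside the energy, not from bounds on the symbols---which your last paragraph in effect already recognizes.
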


Let the solutions for the linearized hydroelastic wave equations around $(W,Q)$ to \eqref{HF14} be $(w,q)$.
Then according to the computation in \cite{WanY}, the linearized hydroelastic waves are as follows:
\begin{equation} \label{wqLinearized}
\begin{cases}
w_t+Fw_{\al}+(1+\mathbf{W})\nP[m-\bar{m}]=0,\\
q_t+Fq_{\al}+Q_{\al}\nP[m-\bar{m}]+\nP[n+\bar{n}]-i\nP[p-\bar{p}]=0,
\end{cases}
\end{equation}
where linearized auxiliary functions are given by
\begin{align*}
m:=&\f{q_{\al}-Rw_{\al}}{J}+\f{\bar{R}w_{\al}}{(1+\mathbf{W})^2}=\f{r_{\al}+R_{\al}w}{J}+\f{\bar{R}w_{\al}}{(1+\mathbf{W})^2}, \\
n:=&\bar{R}\delta R=\f{\bar{R}(q_{\al}-Rw_{\al})}{1+\mathbf{W}}=\frac{\bar{R}(r_{\al}+R_{\al}w)}{1+\mathbf{W}}, \\
p:=&\f{1}{J^{\f12}}\f{d}{d\al}\left[\f{1}{J^{\f12}}\f{d}{d\al}\left(\f{w_{\al\al}}{J^{\f12}(1+\mathbf{W})}-\left(\f{3\mathbf{W}_{\al}}{2J^{\f12}(1+\mathbf{W})^2}-\f{\bar{\mathbf{W}}_{\al}}{2J^{\f32}}\right)w_{\al}\right)\right]\\
&-\f{1}{J^{\f12}}\f{d}{d\al}\left[\frac{(1+\bar{\mathbf{W}})w_{\al}}{2J^{\f32}}\f{d}{d\al}\left(\f{\mathbf{W}_{\al}}{J^{\f12}(1+\mathbf{W})}-\f{\bar{\mathbf{W}}_{\al}}{J^{\f12}(1+\bar{\mathbf{W}})}
\right)\right]\\
&-\frac{(1+\bar{\mathbf{W}})w_{\al}}{2J^{\f32}}\f{d}{d\al}\left[\f{1}{J^{\f12}}\f{d}{d\al}\left(\f{\mathbf{W}_{\al}}{J^{\f12}(1+\mathbf{W})}-\f{\bar{\mathbf{W}}_{\al}}{J^{\f12}(1+\bar{\mathbf{W}})}
\right)\right]\\
&+\f32\left(\f{w_{\al\al}}{J^{\f12}(1+\mathbf{W})}-\left(\f{3\mathbf{W}_{\al}}{2J^{\f12}(1+\mathbf{W})^2}-\f{\bar{\mathbf{W}}_{\al}}{2J^{\f32}}\right)w_{\al}\right)\left[\f{\mathbf{W}_{\al}}{J^{\f12}(1+\mathbf{W})}
-\f{\bar{\mathbf{W}}_{\al}}{J^{\f12}(1+\bar{\mathbf{W}})}\right]^2.
\end{align*}
Switching to the diagonal linearized variable $r:=q-Rw$, the system \eqref{wqLinearized} can be rewritten as
\begin{equation} \label{linearizedeqn}
\begin{cases}
(\p_t+b\p_{\al})w+\f{1}{1+\bar{\mathbf{W}}}r_{\al}+\frac{R_{\al}}{1+\bar{\mathbf{W}}}w=\mathcal{G}_0(w,r),\\
(\p_t+b\p_{\al})r-i\frac{a}{1+\mathbf{W}}w-i\nP p-\frac{w}{1+\mathbf{W}}\p_{\al}\nP c=\mathcal{K}_0(w,r),
\end{cases}
\end{equation}
where on the right-hand side,
\begin{equation*}
\mathcal{G}_0(w,r)=(1+\mathbf{W})(\nP\bar{m}+\bar{\nP}m),\quad
\mathcal{K}_0(w,r)=\bar{\nP}n-\nP\bar{n}-i\nP\bar{p}.
\end{equation*}
The real-valued \textit{frequency-shift} $a$ and the \textit{advection velocity} $b$ are given by
\begin{equation}\label{HF17}
		a:=i\left(\bar{P}[\bar{R}R_{\al}]-P[R\bar{R}_{\al}]\right),\, \quad b:= \nP\left[ \frac{R}{1+\bar{\bw}}\right] + \bar{\nP}\left[ \frac{\bar{R}}{1+\bw}\right],
\end{equation}
and the auxiliary function $c$ is given by
\begin{equation*}
i\tilde{c}:=\f{\mathbf{W}_{\al}}{J^{\f12}(1+\mathbf{W})}
-\f{\bar{\mathbf{W}}_{\al}}{J^{\f12}(1+\bar{\mathbf{W}})}, \quad
ic:=iJ^{-\f12} \p_\al(J^{-\f12}\tilde{c}_\al)+\f12(i\tilde{c})^3.
\end{equation*}

We introduce the control norms:
\begin{equation}\label{D:A74}
\mathcal{A}_0 : = \|(\bw, R) \|_{C^{\epsilon}\times C^{- \f32+\epsilon}}, \quad \mathcal{A}_{2} : = \|(\bw, R) \|_{W^{2+\epsilon, \infty}\times W^{\f12+\epsilon,\infty}}.
\end{equation}
Then we have the following modified energy estimate.
\begin{theorem}
\label{t:Hswellposedflow}
Assume that $\mathcal{A}_\f32 \lesssim 1$ and $\mathcal{A}_{2} \in L^2_t([0,T])$.
Then the linearized hydroelastic system \eqref{linearizedeqn} is well-posed in $\H$ on $[0,T]$.
Furthermore, there exists an energy functional $E^{0}_{lin}(w,r)$ satisfying the following properties on $[0,T]$:
\begin{enumerate}
    \item Norm equivalence:
    \begin{equation*}
        E^{0}_{lin}(w,r) = \left(1+O\left(\mathcal{A}_\f32 \right) \right) \| (w,r)\|^2_{\H}.
    \end{equation*}
    \item The time derivative of $E_{lin}^{0}(w,r)$ is bounded by:
    \begin{equation*}
        \frac{d}{dt} E_{lin}^{0}(w,r) \lesssim_{\CalAZ}  \mathcal{A}_{2}^2 E^{0}_{lin}(w,r).
    \end{equation*}
\end{enumerate} 
\end{theorem}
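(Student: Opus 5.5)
The plan is to follow the scheme used for the corresponding linearized estimates for capillary and gravity water waves in holomorphic coordinates, in the spirit of Ifrim--Tataru and \cite{MR4846707, WanY}. Well-posedness in $\H$ for the linear system \eqref{linearizedeqn} will follow from the a priori energy bound together with a duality argument. The adjoint system has the same structure, so the same estimate holds in the dual space $\H^* = H^{-\f32}\times L^2$ after the usual $|D|^{\f32}$ shift, and existence then follows by a standard Hahn--Banach/approximation argument. Uniqueness comes directly from the energy estimate. Therefore the core of the proof is the construction of $E^0_{lin}$.

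First, I would isolate the leading linear part of \eqref{linearizedeqn}. Using $p$ and the $J$-dependent coefficients, I would write
\[
\nP p = \frac{1}{J^{2}(1+\bw)}\,\partial_\al^4 w + \text{(lower order in } w),
\]
so that the principal system is
\[
(\p_t + b\p_\al) w + \frac{r_\al}{1+\bar\bw} = \dots,\qquad (\p_t+b\p_\al) r - i\frac{\partial_\al^4 w}{J^2(1+\bw)} = \dots .
\]
The quadratic energy adapted to this is
\[
E_0 = \int \frac{1}{J^{\f32}}\,\bigl| (1+\bar\bw)^{?}\p_\al^{\f32}\text{-type weight}\,w\bigr|^2 + |r|^2 \, d\al,
\]
or, more concretely,
\[
E_0 = \int g_1 |\,|D|^{\f32} w|^2 + g_2 |r|^2 \, d\al,
\]
where the real weights $g_1,g_2$ (powers of $J$ and $|1+\bw|$) are chosen so that the leading $\partial_\al^4$ interactions cancel exactly after an integration by parts. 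This parallels the weight $1+a$ in the gravity case. The terms $b\p_\al$ are transport terms and contribute only $\|b_\al\|_{L^\infty}E$. Norm equivalence $E_0=(1+O(\mathcal A_{\f32}))\|(w,r)\|_\H^2$ is immediate, because the weights differ from $1$ by $O(\|\bw\|_{L^\infty})$.

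Next, I would compute $\frac{d}{dt}E_0$ and classify the resulting terms. Commutators of $|D|^{\f32}$ with the coefficients are handled by Coifman--Meyer type commutator estimates, and they are bounded by $\|\bw\|_{W^{2+\epsilon,\infty}}\|(w,r)\|_\H^2$. The subprincipal terms in $\nP p$ that carry $\p_\al^3 w$ and $\p_\al^2 w$, as well as $\frac{a}{1+\bw}w$, $\frac{w}{1+\bw}\p_\al\nP c$ and $R_\al w/(1+\bar\bw)$, are bounded by $\mathcal A_2\|(w,r)\|^2_\H$ after using $\|w\|_{L^2}\lesssim \|w\|_{H^{\f32}}$. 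The right-hand sides $\mathcal G_0,\mathcal K_0$ involve $\bar\nP m$, $\nP \bar m$, $\bar\nP n$ and $\nP\bar p$, which are mixed-frequency projections. For these I would use the standard bound $\|\bar\nP(f\,g_\al)\|\lesssim \|f\|_{C^{k}}\|g\|$ for holomorphic $g$, which places all derivatives on the coefficient. Each such term therefore contributes $\mathcal A_2 \|(w,r)\|_\H^2$ or $\mathcal A_2^2\|(w,r)\|_\H^2$.

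The main obstacle is a set of terms in $\frac{d}{dt}E_0$ that are only bounded by $\mathcal A_{\f52}$-type norms, i.e.\ with half a derivative too many on $\bw$ or $R$. Chief among these are the $\p_\al^3 w$ terms in $p$ paired against $|D|^{\f32}$-level quantities, and $\Im\int r\,\bar{r}_\al b$-type contributions at the top order. These terms are not integrable in time under only $\mathcal A_2\in L^2_t$. To handle them, I would first try to add to $E_0$ a cubic correction $E^{(3)}_{lin}$, namely a normal-form-type quadratic form in $(w,r)$ with coefficients linear in $(\bw,R)$, built from the quadratic normal form symbols $A^h,B^h,\dots$ listed above. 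This correction is designed to cancel the bad quadratic contributions, leaving quartic terms bounded by $\mathcal A_2^2 E$ (and the $\mathcal A_0$-dependent constant). Its size is $O(\mathcal A_{\f32})\|(w,r)\|_\H^2$, which preserves the norm equivalence. Setting $E^0_{lin}=E_0+E^{(3)}_{lin}$ then gives the two stated properties, and Gronwall together with $\mathcal A_2\in L^2_t$ closes the well-posedness.
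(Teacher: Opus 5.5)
There is a genuine gap: as written, your argument does not yield property (2).

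\textbf{All cubic terms must go.} Property (2) is quadratic in $\mathcal{A}_2$, so every term in $\frac{d}{dt}E_0$ that is linear in $(\mathbf{W},R)$ must be cancelled, not only those carrying an extra half derivative. You explicitly accept contributions of size $\mathcal{A}_2\|(w,r)\|_{\H}^2$: the $|D|^{\frac32}$ commutators, the subprincipal parts of $p$, $R_\alpha w/(1+\bar{\mathbf{W}})$, and the mixed projections in $\mathcal{G}_0,\mathcal{K}_0$. You also treat the transport contribution $\|b_\alpha\|_{L^\infty}E$ as harmless, although it is not even controlled by $\mathcal{A}_2$: $b_\alpha\approx R_\alpha+\bar R_\alpha$, while $\mathcal{A}_2$ only contains $\|R\|_{W^{\frac12+\epsilon,\infty}}$. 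Any surviving term of this kind gives growth $\exp(C\int\mathcal{A}_2\,dt)$. On $[0,T\epsilon^{-2}]$ with $\mathcal{A}_2\approx M\epsilon$ this is $e^{CMT/\epsilon}$, whereas the bootstrap in Section \ref{s:ReplaceExact} relies precisely on $\int\mathcal{A}_2^2\,dt\lesssim M^2T$.

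\textbf{Boundedness of the correction is asserted, not shown.} Since the correction has to remove all cubic terms, the real question is whether it is bounded, and it cannot simply be read off from $A^h,B^h,\dots$. These symbols grow linearly in high--low interactions (e.g. $B^h(\xi,\eta)\approx-\frac{i}{4}(\xi+\eta)$ for $|\eta|\ll|\xi|$), so the linearized normal form loses a derivative. Its leading quasilinear part also duplicates what your weights, the transport $b\partial_\alpha$ and the good variable $r=q-Rw$ already encode. The claimed size $O(\mathcal{A}_{\frac32})\|(w,r)\|_{\H}^2$, i.e. the existence of a quasilinear modified energy with bounded cubic corrections, is the substance of the theorem, and you assert it rather than construct it.

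\textbf{How the paper argues, and where $\mathcal{A}_2$ enters.} The paper does not rebuild the energy. It imports the linearized modified energy of \cite{WanY}, which has both properties with the $L^4$-based norm $\mathcal{A}_{\sharp,\frac74}$. It then notes that the one estimate forcing that norm is the bound on the frequency shift. Since $a=i(\bar{\nP}[\bar R R_\alpha]-\nP[R\bar R_\alpha])$ is quadratic in $R$, and the projections force the derivative onto the lower-frequency factor, one gets $\|a\|_{W^{\epsilon,\infty}}\lesssim\|R\|^2_{W^{\frac12+\epsilon,\infty}}\lesssim\mathcal{A}_2^2$. The remaining estimates of \cite{WanY} then go through with $\mathcal{A}_2$ in place of $\mathcal{A}_{\sharp,\frac74}$. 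By listing $\frac{a}{1+\mathbf{W}}w$ among terms bounded linearly by $\mathcal{A}_2$, you overlook this quadratic, derivative-balanced structure, which is exactly why an $L^\infty$-based control norm suffices. (Minor: the principal coefficient in $p$ is $J^{-\frac32}(1+\mathbf{W})^{-1}$, not $J^{-2}(1+\mathbf{W})^{-1}$.)
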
 

We remark that in \cite{WanY}, authors proved the above result with control norm $\mathcal{A}_2$ replaced by the $L^4$ based control norm
\begin{equation*}
 \mathcal{A}_{\sharp,\f74} : = \|(\bw, R) \|_{W^{2+\epsilon,4}\times W^{\f12+\epsilon,4}}.
\end{equation*}
However, the control norm $\mathcal{A}_{\sharp,\f74}$ does not fit the analysis in Section \ref{s:ReplaceExact}.

Note that in \cite{WanY}, one of the key estimate that leads to the choice of the control norm $\mathcal{A}_{\sharp,\f74}$ is the estimate for the frequency shift $a$:
\begin{equation*} 
 \|a\|_{H^\epsilon} \lesssim \|R \|^2_{W^{\frac{1}{2}+\epsilon, 4}} \lesssim \ASSharp.  
\end{equation*}
One can replace this estimate by 
\begin{equation*}
\|a\|_{W^{\epsilon, \infty}} \lesssim \|R \|^2_{W^{\frac{1}{2}+\epsilon, \infty}} \lesssim \mathcal{A}_2^2.
\end{equation*}
By using $\mathcal{A}_2$ as the control norm and doing the rest estimates as in \cite{WanY}, we can prove Theorem \ref{t:Hswellposedflow}.

\section{Instability of cubic NLS equation} \label{s:NLSInstability}

In this section, we recall a constructive proof from Appendix D of \cite{MR4627323} on the instability of the Stokes wave of the focusing cubic NLS problem.

For the 1-D cubic focusing NLS equation \eqref{StandardCubicNLS}, it has a Stokes wave solution
\begin{equation*} %\label{StokesNLS}
    u(t,x) = e^{it}.
\end{equation*}
We consider the following perturbation of the Stokes waves
\begin{equation*}
    u(t,x) = e^{it}(1+ w(t,x)), \quad x\in q\T
\end{equation*}
for some positive real number $q$.
Substituting this expression in \eqref{StandardCubicNLS}, then according to the computation in Appendix D of  \cite{MR4627323}, $w$ solves the  equation
\begin{equation} \label{wNLSEqn}
(i\p_t + \p_x^2)w + 2\Re w = 0.
\end{equation}
We choose $k_0 \in \Z^+$ such that
\begin{equation*}
\left|\frac{k_0}{q}\right| \sqrt{2-\left|\frac{k_0}{q}\right|^2} = \tau,\quad\text{where } \tau = \sup_{k\in \Z} \Re \left|\frac{k}{q}\right| \sqrt{2-\left|\frac{k}{q}\right|^2}.
\end{equation*}
Given $0<\delta\ll 1$ and $s' > \f12$ fixed, we consider the initial data
\begin{equation} \label{InitialDataw}
  w_0 = \frac{1}{\sqrt{q}}\left(\delta_1 e^{i\frac{k_0}{q}x} + \delta_2 e^{-i\frac{k_0}{q}x} +\eta_1 e^{i\frac{x}{q}} +\eta_2 e^{-i\frac{x}{q}}\right),
\end{equation}
where $|\delta_j| = \frac{\delta}{2s'}\ll 1$, $|\eta_j|\ll |\delta_i|$.
Then $w_0$ satisfies the estimate
\begin{equation*}
 \|w_0\|_{H^{s'}(q\T)} \leq \f32 \delta.
\end{equation*}
The result in \cite{MR4627323} shows that the cubic NLS equation \eqref{StandardCubicNLS} is unstable near the Stokes wave solution $u(t,x)$ in the sense that the perturbation $w(t,x)$ does not remain small for all time.
More precisely, we have the following result.
\begin{theorem}\label{t:NLSInstability}
Consider the equation \eqref{wNLSEqn} with initial data \eqref{InitialDataw}.
Then there exist $\mu$ satisfying $|\delta| \ll \mu < 1$ and $T_0 = \log \left(\frac{\mu}{\delta}\right)$
such that
\begin{equation} \label{wT0Est}
 \|w(t) \|_{H^{s'}(q\T)} \lesssim \mu, \quad \forall t \in [0, T_0]; \qquad  \|w(T_0) \|_{L^2(q\T)} \geq \frac{\mu}{4} \gg \delta.
\end{equation}
\end{theorem}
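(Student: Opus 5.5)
The plan is to treat \eqref{wNLSEqn} as a linear equation plus a quadratic-and-higher nonlinearity, and to compare its solution with the growing linear solution on a logarithmic time scale. Strictly speaking, \eqref{wNLSEqn} as displayed is only the linearization. The exact perturbation equation obtained by substituting $u=e^{it}(1+w)$ into \eqref{StandardCubicNLS} is
\begin{equation*}
(i\p_t+\p_x^2)w+2\Re w = -N(w), \qquad N(w)=2|w|^2+w^2+|w|^2w .
\end{equation*}
I will work with this full equation, writing $w=w_{lin}+v$, where $w_{lin}$ solves the linear equation \eqref{wNLSEqn} with data $w_0$.

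First I will analyze the linear flow on each Fourier mode $e^{ikx/q}$. Writing $w=a+ib$ and pairing the modes $\pm k$, the real system reads
\begin{equation*}
a_t=(k/q)^2 b, \qquad b_t=(2-(k/q)^2)a .
\end{equation*}
Its growth rate is $|k/q|\sqrt{2-|k/q|^2}$ whenever $|k/q|<\sqrt2$, and all other modes are neutral. Since $k_0$ attains the supremum $\tau$, I will choose the phases of $\delta_1,\delta_2$ so that $w_0$ has a nontrivial projection onto the unstable eigendirection at $\pm k_0$. Then $\|w_{lin}(t)\|_{L^2}\ge c\,\delta e^{\tau t}$, while in every $H^{s'}$ norm $\|w_{lin}(t)\|_{H^{s'}}\le C\delta e^{\tau t}$. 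The $\eta_j$ modes grow at rate at most $\tau$ and are much smaller, so they do not disturb either bound. I will also normalize time, or equivalently define $T_0$ using $\tau$, so that the statement's $T_0=\log(\mu/\delta)$ corresponds to $\delta e^{\tau T_0}\sim\mu$.

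Next I control the remainder $v$ through Duhamel's formula on $[0,T_0]$. The linear propagator obeys $\|S(t)\|_{H^{s'}\to H^{s'}}\lesssim e^{\tau t}$, because $\tau$ is the maximal growth rate and the neutral modes are uniformly bounded once the finitely many near-degenerate frequencies are handled. Since $H^{s'}$ is an algebra for $s'>1/2$, the nonlinearity satisfies $\|N(w)\|_{H^{s'}}\lesssim\|w\|_{H^{s'}}^2$ when $\|w\|_{H^{s'}}\lesssim1$. I make the bootstrap assumption $\|w(t)\|_{H^{s'}}\le 2C\delta e^{\tau t}$. This gives
\begin{equation*}
\|v(t)\|_{H^{s'}}\lesssim\int_0^t e^{\tau(t-s)}\delta^2e^{2\tau s}\,ds\lesssim \delta^2e^{2\tau t}.
\end{equation*}
As long as $\delta e^{\tau t}\le\mu$ with $\mu$ small, this is at most $\mu\,\delta e^{\tau t}\ll\delta e^{\tau t}$, which closes the bootstrap and yields the first bound in \eqref{wT0Est}.

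At $t=T_0$ the lower bound follows from
\begin{equation*}
\|w(T_0)\|_{L^2}\ge\|w_{lin}(T_0)\|_{L^2}-\|v(T_0)\|_{L^2}\ge c\mu-C\mu^2\ge\mu/4,
\end{equation*}
after adjusting constants and taking $\mu$ small but independent of $\delta$, so that $\mu\gg\delta$.

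The main obstacle is the propagator bound $\|S(t)\|\lesssim e^{\tau t}$, which must hold uniformly in $q$. It can fail at frequencies where $|k/q|=\sqrt2$, since there the $2\times2$ linear system has a Jordan block and the solution grows polynomially in $t$. I would first handle this by noting that such a block contributes at most a factor $(1+t)$, and $(1+t)\lesssim e^{\tau t}$ on the relevant time range; if that is not sufficient, I would absorb the polynomial loss into a slightly smaller exponent in the bootstrap. It is also essential that the initial data lie on the growing eigenvector, so that the lower bound on $\|w_{lin}(t)\|_{L^2}$ holds rather than a decaying combination being selected.
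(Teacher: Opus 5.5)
Your proposal is correct and is essentially the constructive argument the paper imports from Appendix D of \cite{MR4627323}. That argument has three steps: exponential growth of the most unstable Fourier pair $\pm k_0$ under the linearized flow, a Duhamel bootstrap showing the quadratic-and-higher remainder is $O(\delta^2 e^{2\tau t})$ until $\delta e^{\tau t}\sim\mu$, and the triangle inequality at $T_0$. The polynomial loss you anticipate at $|k/q|=\sqrt2$ also occurs at the zero mode, which the nonlinearity excites, and it is absorbed the same way.

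Your changes to the statement as printed are exactly what is needed for the theorem to hold: the full equation with $N(w)=2|w|^2+w^2+|w|^2w$, phases of $\delta_1,\delta_2$ chosen so the data loads the growing eigendirection, and $T_0=\tau^{-1}\log(\mu/\delta)$, which agrees with the printed $T_0$ only when $\tau=1$.
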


\textbf{Acknowledgments.}
Jiaqi Yang is supported by National Natural Science Foundation of China under Grant: 12471225.
Authors would like to thank Prof. Baoping Liu for introducing this research topic. 

\bibliography{HWW}
\bibliographystyle{plain}
	
\end{document}